\tikzset{
>=stealth,
every picture/.style={thick},
graphs/every graph/.style={empty nodes},
}
\tikzstyle{vertex}=[
\tikzstyle{printersafe}=[decoration={snake,amplitude=0pt}]
\newcommand{\pp}{\mathbb{P}}
\newcommand{\rr}{\mathbb{R}}
\newcommand{\cc}{\mathbb{C}}
\def\O#1.{\mathcal {O}_{#1}}
\def\pr #1.{\mathbb P^{#1}}
\def\af #1.{\mathbb A^{#1}}
\def\ses#1.#2.#3.{0\to #1\to #2\to #3 \to 0}
\def\xrar#1.{\xrightarrow{#1}}
\def\K#1.{K_{#1}}
\def\bA#1.{\mathbf{A}_{#1}}
\def\bM#1.{\mathbf{M}_{#1}}
\def\bL#1.{\mathbf{L}_{#1}}
\def\bB#1.{\mathbf{B}_{#1}}
\def\bK#1.{\mathbf{K}_{#1}}
\def\subs#1.{_{#1}}
\def\sups#1.{^{#1}}
\DeclareMathOperator{\Supp}{Supp}
\newtheorem{theorem}{Theorem}[section]
\newtheorem{lemma}[theorem]{Lemma}
\newtheorem{corollary}[theorem]{Corollary}
\newtheorem{claim}[theorem]{Claim}
\theoremstyle{definition}
\newtheorem{example}[theorem]{Example}
\newtheorem{remark}[theorem]{Remark}
\theoremstyle{remark}
\numberwithin{equation}{section}
\newcounter{rownumber}[figure]
\newcounter{rownumber-irr}[figure]
\newcounter{rownumber-p1}[figure]
\begin{document}

\title{Real del Pezzo surfaces without points}

\author[G.~Belousov]{Grigory Belousov}

\maketitle

\begin{abstract}
We consider a real del Pezzo surface without points. We prove that the same surface over complex numbers field $\cc$ has Picard number is at least two.
\end{abstract}

\section{Introduction}

A \emph{log del Pezzo surface} is a projective algebraic surface X with only
log terminal singularities and ample anti-canonical divisor $-K_X$.

Del Pezzo surfaces naturally appear in the log minimal model program
(see, e. g., {\cite{KMM}}). The most interesting class of del Pezzo surfaces
is the class of surfaces with Picard number 1.

Let $X$ be a real de Pezzo surface. Put $\tilde{X}$ is the same surface over complex numbers field $\cc$.
Note that there exists Galois involution $\tau$ that acts on $\tilde{X}$. We see that $\rho(X)\leq\rho(\tilde{X})$. In this paper we consider del Pezzo surface over real number field $\rr$. We consider del Pezzo surface without points over $\rr$.

The main result of this paper is the following.

\begin{theorem}
\label{main-theorem}
Let $X$ be a real del Pezzo surface that does not contain points. Put $\tilde{X}$ be the del Pezzo surface over $\cc$. Assume that $\tilde{X}$ has only log terminal singularities. Then $\rho(\tilde{X})>1$.
\end{theorem}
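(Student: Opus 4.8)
The plan is to argue by contradiction. Since $\tilde X$ is projective we always have $\rho(\tilde X)\geq 1$, so it suffices to rule out $\rho(\tilde X)=1$. Assume $\rho(\tilde X)=1$; I will produce a real point of $X$, contradicting the hypothesis. The starting observation is that the real points $X(\rr)$ are exactly the fixed points of the Galois involution $\tau$ acting on the topological space $\tilde X(\cc)$. As $\tilde X(\cc)$ is a compact triangulable space (being a complex projective variety), the Lefschetz--Hopf fixed-point theorem applies: if the Lefschetz number
\[
L(\tau)=\sum_{i}(-1)^{i}\,\Tr\bigl(\tau^{*}\mid H^{i}(\tilde X(\cc),\qq)\bigr)
\]
is nonzero, then $\tau$ has a fixed point, i.e. $X(\rr)\neq\varnothing$. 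Thus the whole argument reduces to showing $L(\tau)\neq 0$ when $\rho(\tilde X)=1$.

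Next I would pin down the rational cohomology of $\tilde X$. A log del Pezzo surface over $\cc$ is rational, and its log terminal singularities are quotient singularities; hence $\tilde X(\cc)$ is a compact $\qq$-homology manifold with $H^{1}=H^{3}=0$, with $H^{0}\cong H^{4}\cong\qq$, and with $H^{2}(\tilde X(\cc),\qq)\cong\Pic(\tilde X)\otimes\qq$ via the cycle class map, there being no transcendental part since $h^{2,0}=0$. In particular $b_{2}(\tilde X)=\rho(\tilde X)=1$. Because $\tau$ is an anti-holomorphic involution of a surface (complex dimension $2$), its local model multiplies orientation by $(-1)^{2}=+1$, so $\tau$ is orientation-preserving and $\tau^{*}$ acts as $+1$ on $H^{0}$ and on $H^{4}$.

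It remains to analyze $\tau^{*}$ on $H^{2}$. Since $\tau^{2}=\id$, the induced map on the one-dimensional space $H^{2}(\tilde X(\cc),\qq)$ is a linear involution, hence has eigenvalue $\pm 1$ and trace $\pm 1$. Therefore
\[
L(\tau)=1+\Tr\bigl(\tau^{*}\mid H^{2}\bigr)+1=2\pm 1\in\{1,3\},
\]
which is never zero. By the fixed-point theorem $X(\rr)\neq\varnothing$, contradicting the assumption that $X$ contains no real points; hence $\rho(\tilde X)>1$. Note that the argument is robust: it uses only that $b_{2}=1$ forces $\Tr(\tau^{*}\mid H^{2})\neq -2$, so the precise sign of the $\tau$-action on the anticanonical class never has to be determined (this is the step that would otherwise be delicate, cf.\ $\pp^{2}$, where $\tau^{*}=-1$ on $H^{2}$ and $L(\tau)=1=\chi(\pp^{2}(\rr))$).

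The main obstacle is the cohomological input in the singular case: I must justify that a log del Pezzo surface with quotient singularities behaves, for these purposes, like a smooth rational surface, namely that it is a $\qq$-homology manifold (so Poincaré duality and the Lefschetz--Hopf theorem hold with $\qq$-coefficients) and that $b_{1}=0$, $b_{2}=\rho$. For the first point one uses that quotient singularities are rational-homology-manifold points; for the second, the vanishing $H^{1}(\tilde X,\oo)=H^{2}(\tilde X,\oo)=0$ together with purity of the weight-$2$ Hodge structure. A minor additional check is that if $\tau$ fixes a singular point then that point is already real and we are finished, so we may assume the fixed locus meets only the smooth part; in any event the bare Lefschetz--Hopf theorem requires only $L(\tau)\neq 0$ and triangulability of $\tilde X(\cc)$, both of which hold without further hypotheses.
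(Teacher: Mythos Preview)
Your argument is correct and takes a genuinely different route from the paper. The paper argues by an extended algebro-geometric case analysis: assuming $\rho(\tilde X)=1$, it invokes the bound on the number of singular points of a log del Pezzo surface of Picard rank one, rules out the Du Val case by a parity argument, and then studies the minimal resolution $\bar X\to\tilde X$ in detail---finding an extremal $(-1)$-curve $C$, analyzing how $C$ and $\tau(C)$ meet the exceptional configuration, building auxiliary conic bundles, and finally reaching a contradiction by producing a $\tau$-invariant $(-1)$-curve or a $\tau$-fixed point. Your proof bypasses all of this with a single topological stroke: once one knows that a log del Pezzo over $\cc$ has quotient singularities (hence $\tilde X(\cc)$ is a rational $\qq$-homology manifold with $b_1=b_3=0$ and $b_2=\rho$), the Lefschetz number of the anti-holomorphic involution is $2\pm1\neq0$ whenever $b_2=1$, forcing a fixed point. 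The inputs you flag---$\qq$-homology manifold structure, $b_2=\rho$ via the exponential sequence and vanishing of $H^i(\mathcal O)$, orientation-preservation of $\tau$ in complex dimension $2$---are standard and your outline handles them adequately; note in particular that triangulability alone already suffices for Lefschetz--Hopf, so the $\qq$-homology manifold hypothesis is needed only for Poincar\'e duality ($b_3=b_1$) and $b_4=1$. What your approach buys is brevity and conceptual clarity; what the paper's approach buys is explicit structural information about the resolution (the wheel $C+C'+D^{(1)}+D^{(2)}$, the conic bundle, etc.), and independence from topological fixed-point theory, making it closer in spirit to methods that might transport to other ground fields where the analytic topology is unavailable.
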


\begin{example}
\label{rem1}
Note that the surface $X\subset\pp^3$ defined by $$x_0^2+x_1^2+x_2^2+x_3^2=0$$ has no points over $\rr$. So, there exist a del Pezzo surfaces $X$ with no points over $\rr$ and with $\rho(\tilde{X})=2$.
\end{example}

\begin{remark}
\label{rem2}
Assume that $\tilde{X}$ has only du Val singularities and $X$ has no points. Then the degree of $X$ is even.
\end{remark}

The author is grateful to Professor I. A. Cheltsov for proposing this problem and for useful comments.

\section{Preliminary results}

We employ the following notation:

\begin{itemize}
\item
$(-n)$-curve is a smooth rational curve with self intersection
number $-n$.
\item
$K_{X}$: the canonical divisor on $X$.
\item
$\rho(X)$: the Picard number of $X$.
\end{itemize}

\begin{theorem}[see {\cite[Corollary 9.2]{KeM}}]
\label{BMY} Let $X$ be a rational surface with log terminal
singularities and $\rho(X)=1$. Then
\[
\sum_{P\in X}\frac{m_P-1}{m_P}\leq 3,
\]
where
$m_P$ is the order of the local fundamental group $\pi_1(U_P-\{P\})$
($U_P$ is a sufficiently small neighborhood of $P$).
\end{theorem}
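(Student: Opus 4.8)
The quantity $m_P$ is finite and the left-hand side of the inequality has a clean topological meaning, so the plan is to recast the bound as a positivity statement for an orbifold Euler number and then to feed it into a Bogomolov--Miyaoka--Yau estimate. First I would use that a two-dimensional log terminal singularity is a quotient singularity: near each $P$ the surface is analytically $\cc^2/G_P$ with $G_P$ acting freely off the origin, so $m_P=|G_P|=|\pi_1(U_P\setminus\{P\})|$ is precisely the order appearing in the statement, and $X$ is a rational $\qq$-homology manifold. I would then introduce the orbifold Euler number $e_{\mathrm{orb}}(X)=e(X)-\sum_{P}\bigl(1-\tfrac{1}{m_P}\bigr)$, where $e(X)$ denotes the topological Euler characteristic. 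With this normalization the target bound $\sum_P\frac{m_P-1}{m_P}\le 3$ is literally equivalent to $e_{\mathrm{orb}}(X)\ge e(X)-3$.

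The next step is to compute $e(X)=3$. Since $X$ has only quotient singularities it is a rational homology manifold, so Poincaré duality holds with $\qq$-coefficients; moreover $b_1(X)=0$ because $X$ is rational, whence $b_3(X)=0$ as well. Rationality also forces the second cohomology to be spanned by algebraic classes, so $b_2(X)=\rho(X)=1$, while $b_0(X)=b_4(X)=1$. Summing gives $e(X)=1+1+1=3$, and the desired inequality collapses to the single assertion $e_{\mathrm{orb}}(X)\ge 0$.

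To produce $e_{\mathrm{orb}}(X)\ge 0$ I would read $e_{\mathrm{orb}}$ as the orbifold second Chern number $c_2^{\mathrm{orb}}(X)$ and bound it below using the orbifold first Chern number. Because $\rho(X)=1$, the rational class group is generated by a single ample class, so $K_X\equiv\lambda H$; since $X$ is rational, hence uniruled, $K_X$ is not pseudoeffective, which forces $\lambda\le 0$. Thus $-K_X$ is nef and $(-K_X)^2=K_X^2\ge 0$. The orbifold Bogomolov--Miyaoka--Yau inequality $(c_1^{\mathrm{orb}})^2\le 3\,c_2^{\mathrm{orb}}$, that is $K_X^2\le 3\,e_{\mathrm{orb}}(X)$, then yields $e_{\mathrm{orb}}(X)\ge \tfrac13 K_X^2\ge 0$, closing the chain.

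The serious content, and the step I expect to be the main obstacle, is the orbifold Bogomolov--Miyaoka--Yau estimate itself. The way I would establish it is to pass to the minimal resolution $\pi\colon Y\to X$, encode each exceptional configuration by the fractional boundary $\Delta=\sum_i\bigl(1-\tfrac{1}{m_i}\bigr)E_i$ dictated by the quotient structure, and prove the logarithmic inequality on the pair $(Y,\Delta)$ either through Miyaoka's semistability argument applied to the orbifold cotangent sheaf or through the existence of an orbifold K\"ahler--Einstein metric in the log terminal regime. Bookkeeping the discrepancies so that the orbifold Chern numbers computed on $Y$ agree with the intrinsic ones on $X$ is delicate, and checking the semistability (or metric) hypotheses is where essentially all the difficulty resides; by comparison the reduction carried out in the earlier steps is routine. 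The bound is sharp: degenerations with three cyclic quotient singularities for which $\sum_P\bigl(1-\tfrac{1}{m_P}\bigr)\to 3$ force $e_{\mathrm{orb}}(X)\to 0$, matching the equality case.
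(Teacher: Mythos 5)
The paper offers no proof of this statement at all: it is imported verbatim from Keel--McKernan \cite{KeM}, Corollary 9.2, so your attempt has to be judged on its own merits. Your reduction is fine as far as it goes: log terminal surface points are quotient singularities, $X$ is a $\qq$-homology manifold with $b_1=b_3=0$ and $b_2=\rho(X)=1$, so $e(X)=3$ and the claim is equivalent to $e_{\mathrm{orb}}(X)\geq 0$. But the two steps that carry the weight both fail. First, ``$X$ rational, hence uniruled, hence $K_X$ not pseudoeffective'' is false on a singular surface: uniruledness kills pseudoeffectivity of $K_Y$ on a smooth model $Y$, while $\pi^*K_X=K_Y+D^\sharp$ with $D^\sharp\geq 0$ exceptional, and this sum can perfectly well be big. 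There exist rational surfaces with quotient singularities, $\rho=1$, and $K_X$ \emph{ample} (such examples are exactly why the ``topological BMY'' problem is nontrivial), so you cannot conclude $\lambda\leq 0$; note the theorem as stated has no anticanonical positivity hypothesis, so this case must actually be covered, not excluded.

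Second, and more seriously, the inequality $K_X^2\leq 3\,e_{\mathrm{orb}}(X)$ that you invoke is a theorem only when $K_X$ (or $K+\Delta$) is nef, and it fails precisely in the regime where you apply it, namely $-K_X$ nef: the quadric cone $X=\pp(1,1,2)$ has $K_X^2=8$, a single $A_1$ point, hence $e_{\mathrm{orb}}(X)=3-\tfrac12=\tfrac52$ and $3\,e_{\mathrm{orb}}(X)=\tfrac{15}{2}<8$; the surfaces $\pp(1,1,n)$ make the failure arbitrarily bad. Both of your proposed proofs of the key estimate break down for the same structural reason: Miyaoka-type semistability of the (orbifold) cotangent sheaf needs $K$ pseudoeffective, and orbifold K\"ahler--Einstein metrics need not exist on log del Pezzo surfaces ($\pp(1,1,2)$ has non-reductive automorphism group, so Matsushima's obstruction already rules one out). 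The correct argument, as in Megyesi's generalisation of BMY and in Section 9 of \cite{KeM}, splits into cases according to the sign of $K_X$ on the rank-one surface: if $K_X$ is nef, the genuine orbifold BMY inequality applies and yields $e_{\mathrm{orb}}(X)\geq K_X^2/3\geq 0$; if $-K_X$ is nef, one needs the separate theorem that $e_{\mathrm{orb}}(X)\geq 0$, proved not by the naive Chern-number inequality but by an analysis of the Harder--Narasimhan filtration of the cotangent sheaf and the induced foliation. Your outline applies the nef-$K$ tool in the anti-nef case and erroneously discards the nef case, so the chain $e_{\mathrm{orb}}(X)\geq\tfrac13 K_X^2\geq 0$ does not close.
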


\begin{theorem}[Hodge, see, e.g., {\cite{Har}}, Theorem 1.9, Remark 1.9.1, Ch. 5]
\label{Hodge} The intersection form on a surface $X$ has the signature $(1,\rho(X)-1)$, where $\rho(X)$ is the Picard number of $X$.
\end{theorem}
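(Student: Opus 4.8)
The plan is to work in the real Néron--Severi space $N:=\operatorname{Num}(X)\otimes\rr$, which has dimension $\rho(X)$ and carries the intersection form as a nondegenerate symmetric bilinear form (nondegeneracy being built into numerical equivalence). I would fix an ample class $H$; then $H^2>0$, since a suitable multiple $mH$ is very ample and embeds $X$, so $(mH)^2$ is the positive degree of the image. As $H^2\neq 0$, the line $\rr H$ splits off its orthogonal complement, $N=\rr H\oplus H^{\perp}$ with $\dim H^{\perp}=\rho(X)-1$. Everything then reduces to showing that the form is negative definite on $H^{\perp}$: indeed, if one knows (i) that no $D\in H^{\perp}$ has $D^2>0$, then the restriction to $H^{\perp}$ is negative semidefinite, and (ii) a nonzero radical vector of that restriction would pair trivially with all of $H^{\perp}$ and, being orthogonal to $H$, with $H$ as well, hence lie in the radical of the nondegenerate form on $N$ and so be zero. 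Thus the restriction is in fact negative definite, and Sylvester's law of inertia yields the signature $(1,\rho(X)-1)$.

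The crux is statement (i), which I would first prove for an integral class $D\in\operatorname{Num}(X)$ with $D\cdot H=0$ and $D\neq 0$, namely that $D^2\le 0$. Suppose instead $D^2>0$. Because the ample cone is open in $N$, the class $H':=D+nH$ is ample for $n\gg 0$, and $D\cdot H'=D^2+n(D\cdot H)=D^2>0$. I then invoke the Riemann--Roch lemma that a class $A$ with $A^2>0$ and $A\cdot H'>0$ for some ample $H'$ has $mA$ effective for $m\gg 0$: asymptotically
\[
\chi(\oo_X(mA))=\chi(\oo_X)+\tfrac12\bigl(m^2A^2-m\,A\cdot K_X\bigr)\xrightarrow[m\to\infty]{}+\infty,
\]
while Serre duality $h^2(mA)=h^0(K_X-mA)$ together with $(K_X-mA)\cdot H'\to-\infty$ forces $h^0(K_X-mA)=0$ for large $m$, whence $h^0(mA)>0$. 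Applying this with $A=D$ shows that $mD$ is linearly equivalent to a nonzero effective divisor for $m\gg 0$ (nonzero because $D\not\equiv 0$), and such a divisor meets the ample $H$ strictly positively, contradicting $mD\cdot H=0$. Hence $D^2\le 0$ for every integral $D\in H^{\perp}$.

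To pass from integral to real classes I would argue by density: since $H$ is integral, $H^{\perp}$ is a rational subspace of $N$ in which the rational points are dense, while $\{x:x^2>0\}$ is open. If some real $D\in H^{\perp}$ had $D^2>0$, a nearby rational class in $H^{\perp}$ would still have positive square, and clearing denominators would produce an integral class in $\operatorname{Num}(X)$ orthogonal to $H$ with positive self-intersection, contradicting the integral case just established. Thus (i) holds over $\rr$, and the reduction of the first paragraph finishes the proof.

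I expect the genuine obstacle to be the extraction of effectivity from the positivity of the Euler characteristic: one must prevent the intermediate cohomology $h^1$ from absorbing all the growth, which is exactly why Serre duality (to convert $h^2$ into an $h^0$) and the perturbation to an ample $H'$ with $D\cdot H'>0$ are combined, so that $(K_X-mA)\cdot H'\to-\infty$ rules out the unwanted term. A secondary point deserving care is that $\rho(X)$ must be read as the rank of $\operatorname{NS}(X)$, so that the form on $N$ is honestly nondegenerate. Over $\cc$ one may alternatively deduce the same signature from the Hodge decomposition of $H^2(X,\rr)$ together with the Hodge--Riemann bilinear relations, restricting the cup product to the space of real $(1,1)$-classes, which contains the ample, hence positive, class $H$.
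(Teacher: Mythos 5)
The paper does not prove this statement at all---it quotes it as a standard result with a citation to Hartshorne, Ch.~V, Theorem~1.9---and your argument is precisely the classical proof from that reference: split $N=\rr H\oplus H^{\perp}$, perturb to an ample $H'=D+nH$ so that $D\cdot H'=D^2>0$, use Riemann--Roch together with Serre duality (to kill $h^0(K_X-mD)$ via $(K_X-mD)\cdot H'\to-\infty$) to make $mD$ effective and contradict $D\cdot H=0$, then pass from integral to real classes by density and finish with nondegeneracy of the form on $\operatorname{Num}(X)\otimes\rr$. Your write-up is correct and essentially identical to the cited source's proof; the only caveat, inherited from Hartshorne, is that it presupposes $X$ smooth and projective, whereas the paper later applies the theorem to singular surfaces, where one would pull back to the minimal resolution or invoke Mumford's intersection theory.
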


Let $X$ be a real de Pezzo surface. Put $\tilde{X}$ is the same surface over complex numbers field $\cc$.
Note that there exists Galois involution $\tau$ that acts on $\tilde{X}$.

Let $\pi\colon\bar{X}\rightarrow\tilde{X}$ be the minimal resolution, $D=\sum D_i$ be the exceptional divisor. Put $\pi^*(K_{\tilde{X}})=K_{\bar{X}}+D^\sharp$, where $D^\sharp=\sum\alpha_i D_i$, $0<\alpha_i<1$. Let $C$ be a curve on $\bar{X}$ such that $-C\cdot(K_{\bar{X}}+D^\sharp)$ attains the smallest positive value.

\begin{claim}
\label{DuVal}
Assume that $\tilde{X}$ has only du Val singular points. Then $\rho(\tilde{X})>1$.
\end{claim}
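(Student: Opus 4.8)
The plan is to argue by a parity (Euler-characteristic) count on the minimal resolution, exploiting that the absence of real points forces the Galois involution to act freely. Since $X$ has no real points, the Galois involution $\tau$ has no fixed points on $\tilde{X}(\cc)$ whatsoever, \emph{including at the singular points}; hence the singular points of $\tilde{X}$ occur in conjugate pairs $\{P,\tau(P)\}$ with $\tau(P)\neq P$. Because the minimal resolution $\pi\colon\bar{X}\to\tilde{X}$ is canonical, $\tau$ lifts to an involution $\bar{\tau}$ of $\bar{X}$ with $\pi\circ\bar{\tau}=\tau\circ\pi$; any fixed point of $\bar{\tau}$ would map to a fixed point of $\tau$, so $\bar{\tau}$ is fixed-point free as well. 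Note that $\bar{X}$ is a smooth rational surface (the minimal resolution of the rational surface $\tilde{X}$), and that since $\tilde{X}$ has only du Val singularities the resolution is crepant, i.e.\ $D^\sharp=0$ and $K_{\bar{X}}=\pi^*K_{\tilde{X}}$, so $\bar{X}$ is a weak del Pezzo surface with $K_{\bar{X}}^2=K_{\tilde{X}}^2>0$.

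Next I would extract the numerical contradiction. A free involution on the compact manifold $\bar{X}$ gives $\chi(\bar{X})=2\,\chi(\bar{X}/\bar{\tau})$, so $\chi(\bar{X})$ is even; since $\bar{X}$ is rational we have $b_1=b_3=0$ and $b_2=\rho(\bar{X})$, whence $\chi(\bar{X})=\rho(\bar{X})+2$ and therefore $\rho(\bar{X})$ is even. On the other hand, $\bar{\tau}$ permutes the exceptional $(-2)$-curves $\{D_i\}$, carrying those over $P$ to those over $\tau(P)\neq P$; thus no $D_i$ is $\bar{\tau}$-invariant, the curves are permuted freely in pairs, and the number $r$ of exceptional curves is even. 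Finally, du Val singularities are rational, so $\rho(\bar{X})=\rho(\tilde{X})+r$. Combining these, $\rho(\tilde{X})=\rho(\bar{X})-r$ is even; as $\rho(\tilde{X})\geq 1$ for a projective surface, this forces $\rho(\tilde{X})\geq 2$, proving the claim.

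The main obstacle is not the arithmetic but the geometric bookkeeping underlying it: one must justify rigorously that ``no real points'' means $\tau$ is fixed-point free as a map of the \emph{complex} points including the singular locus, and that the minimal resolution is $\tau$-equivariant so that $\bar{\tau}$ is a genuine free involution of the smooth surface $\bar{X}$. One should likewise confirm that $\bar{X}$ is rational and that $\rho(\bar{X})=\rho(\tilde{X})+r$ in the du Val (rational-singularity) case, so that $\chi(\bar{X})=\rho(\bar{X})+2$. As an alternative to the quotient computation, the same parity follows from the topological Lefschetz fixed-point theorem: freeness gives $L(\bar{\tau})=0$, and since $\bar{\tau}$ acts as $+1$ on $H^0$ and on $H^4$ (an antiholomorphic involution of a complex surface is orientation preserving), one obtains $\Tr(\bar{\tau}^*\mid H^2(\bar{X},\qq))=-2$; hence the $\pm1$-eigenspaces of $\bar{\tau}^*$ have dimensions differing by $2$, again making $\rho(\bar{X})$ even. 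Here the Hodge index theorem (Theorem~\ref{Hodge}), applied to the $\bar{\tau}$-invariant class $K_{\bar{X}}$ with $K_{\bar{X}}^2>0$, shows that $K_{\bar{X}}$ spans the unique positive direction inside the $(+1)$-eigenspace and that the $(-1)$-eigenspace is negative definite, confirming the consistency of the two eigenspace dimensions with the signature $(1,\rho(\bar{X})-1)$.
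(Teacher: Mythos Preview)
Your argument is correct and structurally identical to the paper's: both show that $\rho(\bar{X})$ is even and that the number $r=\#D$ of exceptional curves is even, then conclude $\rho(\tilde{X})=\rho(\bar{X})-r$ is even and hence $\geq 2$. The only difference is in how the parity of $\rho(\bar{X})$ is obtained. The paper invokes Remark~\ref{rem2} (the degree $K_{\tilde{X}}^2$ is even) and the identity $\deg\tilde{X}=10-\rho(\bar{X})$, whereas you prove directly that $\chi(\bar{X})=\rho(\bar{X})+2$ is even via the free antiholomorphic involution. By Noether's formula these two statements are equivalent, so your argument in effect supplies a proof of Remark~\ref{rem2}, which the paper states without justification. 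Your Lefschetz fixed-point alternative and the Hodge-index remark are correct but unnecessary for the claim; the covering-space computation of $\chi$ already suffices.
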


\begin{proof}
By \ref{rem2} we see that the degree of $X$ is even. On the other hand, $\deg\tilde{X}=10-\rho(\bar{X})=10-\rho(\tilde{X})-\# D$. Since $X$ has no points, we see that there no exist $\tau$-invariant components of $D$. So, $\# D$ is even. Hence, $\rho(\tilde{X})$ is even. Then $\rho(\tilde{X})>1$.
\end{proof}

\begin{lemma}[{\cite[Lemma 1.5]{Zh}}]
\label{Zhan0}
Assume that $\phi\colon\bar{X}\rightarrow\pp^1$ is a conic bundle. Then the following
assertions hold:
\begin{enumerate}
\item $\#\{\text{irreducible components of $D$ not in any fiber of $\phi$}\}=1+\sum\limits_F(\#\{\text{$(-1)$-curves in $F$}\}-1)$, where $F$ moves over all singular fibers of $\phi$.
\item If a singular fiber $F$ consists only of $(-1)$-curves and $(-2)$-curves then $F$ has
one of the following dual graphs:
$$
\xymatrix@R=0.8em{\\
\bullet\ar@{-}[r]\ar@{-}[r]&\circ\ar@{-}[r]&\bullet
}\eqno(1)
$$

$$
\xymatrix@R=0.8em{
&&&&&\bullet\\
\circ\ar@{-}[r]\ar@{-}[r]&\bullet\ar@{-}[r]&\bullet\ar@{-}[r]&\cdots\ar@{-}[r]&\bullet\ar@{-}[r]&\bullet\ar@{-}[r]\ar@{-}[u]&\bullet
}\eqno(2)
$$

$$
\xymatrix@R=0.8em{
\\
\circ\ar@{-}[r]\ar@{-}[r]&\bullet\ar@{-}[r]&\bullet\ar@{-}[r]&\cdots\ar@{-}[r]&\bullet\ar@{-}[r]&\bullet\ar@{-}[r]&\circ
}\eqno(3)
$$ where $\circ$ denotes a $(-1)$-curve, $\bullet$ denotes a $(-2)$-curve.
\end{enumerate}
\end{lemma}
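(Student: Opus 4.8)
The plan is to exploit the standard structure theory of singular fibres of a conic bundle on a smooth surface, together with the fact that, since $\pi$ is the \emph{minimal} resolution, every component $D_i$ satisfies $D_i^2\le -2$ and hence is never a $(-1)$-curve. Throughout I use that each fibre $F$ of $\phi$ is a connected curve of arithmetic genus $0$, so its dual graph is a tree of smooth rational curves meeting transversally; that $F^2=0$ and $F\cdot C=0$ for every component $C\subset F$; and that $-K_{\bar X}\cdot F=2$. By Zariski's lemma the intersection matrix of the components of a fixed fibre is negative semi-definite with one-dimensional radical spanned by the vector of multiplicities. These are the only inputs I will need.

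For part (1) I would argue by comparing Picard numbers. A $(-1)$-curve in a fibre is never a component of $D$ because $D_i^2\le-2$; conversely, I claim every fibre component with self-intersection $\le-2$ is a component of $D$. Granting this, in each singular fibre $F$ the components lying off $D$ are precisely its $\#\{(-1)\text{-curves in }F\}$ many $(-1)$-curves, so the number of \emph{vertical} components of $D$ is $\sum_F\big(n_F-\#\{(-1)\text{-curves in }F\}\big)$, where $n_F$ is the total number of components of $F$ and the sum runs over singular fibres (a smooth fibre is irreducible of self-intersection $0$, hence off $D$). Combining the conic-bundle identity $\rho(\bar X)=2+\sum_F(n_F-1)$ with $\#D=\rho(\bar X)-\rho(\tilde X)$ (valid since $\tilde X$ is $\qq$-factorial), and subtracting the vertical count from $\#D$, a short manipulation gives
\[
\#\{\text{horizontal components of }D\}=2-\rho(\tilde X)+\sum_F\big(\#\{(-1)\text{-curves in }F\}-1\big),
\]
and the asserted formula is exactly the case $\rho(\tilde X)=1$, the standing hypothesis of the cited setting (indeed Example~\ref{rem1} shows the formula fails when $\rho(\tilde X)=2$). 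The main obstacle is the claimed converse, that a fibre component $C$ with $C^2\le-2$ must be $\pi$-exceptional: I would prove this by contradiction from $\rho(\tilde X)=1$, noting that a non-contracted $C$ has image numerically proportional to the ample class $-K_{\tilde X}$, while its disjointness from a general fibre should force a vanishing intersection downstairs, contradicting ampleness; making this positivity argument airtight is the delicate point.

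For part (2) the key identity is $-K_{\bar X}\cdot F=\sum_i m_i(2-n_i)=2$, where $C_i^2=-n_i$. If $F$ has only $(-1)$- and $(-2)$-curves, every $(-2)$-curve contributes $0$, so $\sum_E m_E=2$ over the $(-1)$-curves $E$. Hence either there are two $(-1)$-curves of multiplicity $1$, or a single $(-1)$-curve of multiplicity $2$. A $(-1)$-curve $E$ of multiplicity $1$ satisfies $\sum_C m_C(E\cdot C)=1$, forcing it to meet exactly one component (of multiplicity $1$), i.e. to be a leaf; propagating $m_{i-1}-2m_i+m_{i+1}=0$ along the $(-2)$-curves then keeps all interior multiplicities equal to $1$ and forbids any branch vertex, yielding graph~(3). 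A $(-1)$-curve $E$ of multiplicity $2$ satisfies $\sum_C m_C(E\cdot C)=2$, so it either meets two multiplicity-$1$ curves—giving the chain $\bullet-\circ-\bullet$ of graph~(1)—or one curve of multiplicity $2$; solving the relations along the resulting chain of $(-2)$-curves then produces a single trivalent $(-2)$-vertex with two multiplicity-$1$ tails, which is graph~(2).

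The part I expect to be most delicate in (2) is not the multiplicity bookkeeping but excluding the unwanted tree shapes: I must rule out two distinct branch vertices, a branch vertex of valency $\ge 4$, and any interior multiplicity jump, and check that the radical of the intersection matrix is genuinely one-dimensional for each surviving candidate. Equivalently, I would organise (2) as a descending induction that contracts a $(-1)$-curve of $F$ and tracks how the $(-1)/(-2)$ condition degenerates the remaining configuration, reducing to a smooth fibre after finitely many blow-downs; this blow-up/blow-down viewpoint makes the finiteness of possibilities transparent and is the route I would ultimately take.
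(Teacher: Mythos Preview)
The paper does not give its own proof of this lemma: it is quoted verbatim from Zhang \cite[Lemma~1.5]{Zh} and left unproved, so there is nothing in the text to compare your argument against. That said, your outline is essentially the standard one and is sound for part~(2); the multiplicity bookkeeping from $-K_{\bar X}\cdot F=2$ together with the tree structure of a genus-zero fibre forces exactly the three configurations, and the ``unwanted shapes'' you worry about are already excluded by the relations $\sum_{C\ne C_i} m_C(C\cdot C_i)=2m_i$ at each $(-2)$-vertex.

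For part~(1) there is one genuine gap. Your Picard-number count is correct and does reduce the statement to the claim that every vertical component $C$ with $C^2\le -2$ lies in $D$. But the justification you sketch for this claim does not work: a general fibre $F_0$ is disjoint from $C$ on $\bar X$, yet $\pi(F_0)$ necessarily passes through the singular points of $\tilde X$ lying under the \emph{horizontal} components of $D$ (and such components must exist, else the formula itself gives $0=1+\sum_F(\cdots)$). Hence $\pi^*\pi(F_0)\ne F_0$, and $C\cdot F_0=0$ does not imply $\pi(C)\cdot\pi(F_0)=0$. The clean argument bypasses fibres entirely: if $C\not\subset D$ then $D^\sharp\cdot C\ge 0$ and $-K_{\bar X}\cdot C=2+C^2\le 0$, so
\[
\pi^*(-K_{\tilde X})\cdot C=(-K_{\bar X}-D^\sharp)\cdot C\le 0,
\]
whereas $\pi^*(-K_{\tilde X})\cdot C=-K_{\tilde X}\cdot\pi(C)>0$ by ampleness of $-K_{\tilde X}$. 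This needs only that $\tilde X$ is log del Pezzo; the hypothesis $\rho(\tilde X)=1$ (implicit in Zhang's rank-one setting, as you correctly note) enters only through the Picard-number identity $\#D=\rho(\bar X)-1$.
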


\begin{lemma}[{\cite[Lemma 2.7]{Bel2}}]
\label{Pr1}
Assume that there exists a conic bundle $\phi\colon\bar{X}\rightarrow\pp^1$ such that $C$ lies in singular fiber that contains only $(-2)$-curves and one $(-1)$-curve $C$. Then every singular fiber of $\phi$ contains only $(-1)$- and $(-2)$-curves. Moreover, every $(-1)$-curve in singular fiber is minimal.
\end{lemma}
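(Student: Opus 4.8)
The plan is to exploit that $-(K_{\bar X}+D^\sharp)=-\pi^*K_{\tilde X}$ is nef, together with the numerical rigidity of the fibers of a conic bundle. First I would record two basic facts about the quantity $v(\Gamma):=-\Gamma\cdot(K_{\bar X}+D^\sharp)$. Since $-K_{\tilde X}$ is ample and $v(\Gamma)=-K_{\tilde X}\cdot\pi_*\Gamma$, one has $v(\Gamma)=0$ precisely when $\Gamma$ is $\pi$-exceptional (a component of $D$), and $v(\Gamma)>0$ otherwise; by the very definition of $C$, every non-exceptional curve then satisfies $v(\Gamma)\ge\delta$, where $\delta:=v(C)$ is the smallest positive value. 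Because $\pi$ is the \emph{minimal} resolution, $D$ contains no $(-1)$-curve, so every $(-1)$-curve is non-exceptional; conversely, a $(-2)$-curve $E$ has $E\cdot K_{\bar X}=0$, whence $v(E)=-E\cdot D^\sharp\le 0$, and nefness forces $v(E)=0$, so every $(-2)$-curve is $\pi$-exceptional and contributes nothing to $v$.

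Next I would compute $\mu:=v(F)$ for the fiber class, which is constant since all fibers are numerically equivalent. Adjunction on a general fiber gives $F\cdot K_{\bar X}=-2$, so for any fiber $F'=\sum_j m_jC_j$ one has $\sum_j m_j(C_j\cdot K_{\bar X})=-2$; sorting by self-intersection type yields
$$\sum_{C_j\ (-1)\text{-curve}}m_j \;=\; 2+\sum_{C_j\ (-n)\text{-curve},\,n\ge 3}m_j(n-2).$$
The hypothesis enters when evaluating $\mu$ on the distinguished fiber $F_0$ through $C$: by assumption $F_0$ consists of $C$ together with $(-2)$-curves, the latter $\pi$-exceptional and hence contributing $0$, and since $C$ is the only $(-1)$-curve the displayed identity forces its multiplicity in $F_0$ to be $2$. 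Therefore $\mu=2\,v(C)=2\delta$.

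Finally I would run the comparison on an arbitrary singular fiber $F'$. Splitting $\mu=v(F')$ over the non-exceptional components and using $v(C_j)\ge\delta$ gives $2\delta=\mu\ge\delta\sum_{\text{non-exc}}m_j$, so $\sum_{\text{non-exc}}m_j\le 2$. As every $(-1)$-curve is non-exceptional, combining this with the identity above (where each term has $n-2\ge 1$) forces $\sum_{(-n),\,n\ge3}m_j(n-2)=0$: there are no components of self-intersection $\le-3$, which is the first assertion. Consequently the inequality is an equality, $\sum_{(-1)}m_j=2$ and $\sum m_j\,v(C_j)=2\delta$ with each $v(C_j)\ge\delta$, which forces $v(C_j)=\delta$ for every $(-1)$-curve of $F'$; that is, each such curve is minimal. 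The step I expect to require the most care is the bookkeeping that simultaneously pins down the $(-2)$-curves as $\pi$-exceptional and excludes hypothetical components of self-intersection $\le-3$ that might a priori be non-exceptional; once $v$ is established to be nef and the two evaluations of $\mu$ are reconciled, everything collapses to the single inequality $\sum_{(-1)}m_j\le\sum_{\text{non-exc}}m_j\le 2$ pitted against $\sum_{(-1)}m_j\ge 2$.
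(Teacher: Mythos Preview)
Your argument is correct and follows essentially the same route as the paper: compute $-(K_{\bar X}+D^\sharp)\cdot F_0=2a$ on the distinguished fiber, then on an arbitrary singular fiber bound the same quantity from below by $a$ times the total multiplicity of the $(-1)$-components, pit this against the adjunction identity $F'\cdot K_{\bar X}=-2$, and read off both conclusions from the resulting equality. The only cosmetic difference is that the paper writes the decomposition $F'=\sum n_iE_i+\Delta'$ with $\Supp(\Delta')\subset\Supp(D)$ directly (tacitly using, as you do for $(-2)$-curves, that any $(-n)$-curve with $n\ge 2$ satisfies $v\le 0$ and hence must be $\pi$-exceptional), whereas you keep the exceptional/non-exceptional bookkeeping explicit and derive the absence of $(-n)$-curves with $n\ge 3$ from the squeeze $2\le \sum_{(-1)}m_j\le\sum_{\text{non-exc}}m_j\le 2$.
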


\begin{proof}
Let $F=2C+\Delta$ be the singular fiber of $\phi$ that contain $C$. Note that $\Supp(\Delta)\subset\Supp(D)$. Let $F'$ be a singular fiber of $\phi$. Then $F'=\sum n_i E_i+\Delta'$, where $E_i$ are $(-1)$-curves, $\Supp(\Delta')\subset\Supp(D)$. Since $F\sim F'$, we see that \[2a=-(2C+\Delta)\cdot(K_{\bar{X}}+D^{\sharp})=-(\sum n_i E_i+\Delta')\cdot(K_{\bar{X}}+D^{\sharp})=\sum n_i e_i\geq(\sum n_i)a,\] where $a=-C\cdot (K_{\bar{X}}+D^{\sharp})\leq -E_i\cdot (K_{\bar{X}}+D^{\sharp})=e_i$. Then $\sum n_i=2$. Hence, every singular fiber of $\phi$ is of type as in lemma \ref{Zhan0}. Moreover, every $(-1)$-curve in a singular fiber of $\phi$ is minimal.
\end{proof}

\begin{lemma}[{\cite[Lemma 2.1]{Zh}}]
\label{Zhan1}
Assume that $|C+D+K_{\bar{X}}|\neq\emptyset$. Then there exists a unique decomposition $D=D'+D''$ such that $C+D''+K_{\bar{X}}\sim 0$ and $C\cdot D_i=D''\cdot D_i=K_{\bar{X}}\cdot D_i=0$ for every irreducible component $D_i$ of $D'$
\end{lemma}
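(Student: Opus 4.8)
The plan is to lean on two structural facts about $\bar{X}$: the exceptional locus $D$ of the minimal resolution has negative definite intersection matrix, and $\bar{X}$ is a smooth rational surface, so that $h^1(\oo_{\bar{X}})=0$ and numerical equivalence of divisors coincides with linear equivalence. I would also record at the outset that $(K_{\bar{X}}+D^\sharp)\cdot D_i=0$ for every $i$, since $K_{\bar{X}}+D^\sharp=\pi^*K_{\tilde{X}}$ and each $D_i$ is $\pi$-exceptional.

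First I would fix an effective divisor $G\sim C+D+K_{\bar{X}}$, which exists by hypothesis. Uniqueness is then immediate: if $D=D_1'+D_1''=D_2'+D_2''$ were two decompositions as in the statement, then both $C+D_1''+K_{\bar{X}}$ and $C+D_2''+K_{\bar{X}}$ are $\sim 0$, so their difference $D_1''-D_2''$ is linearly (hence numerically) trivial and supported on $\Supp(D)$; negative definiteness of the intersection matrix of $D$ forces $D_1''=D_2''$.

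For existence I would construct $D''$ from the connected components of $D$ that are \emph{attached} to $C+K_{\bar{X}}$. Negative definiteness provides a unique $\qq$-divisor $\Theta=\sum\theta_iD_i$ with $(C+K_{\bar{X}}+\Theta)\cdot D_i=0$ for all $i$, and the role of the effective representative $G$ is to pin $\Theta$ down, forcing its coefficients into $\{0,1\}$ so that $D'':=\Theta$ is a genuine reduced sub-divisor and $D':=D-D''$ collects the remaining components. Writing $C+D''+K_{\bar{X}}\sim G-D'$, the point is that $G$ may be taken equal to the reduced divisor $D'$ (a divisor supported on the negative definite locus is rigid, so $G\sim D'$ gives $G=D'$); this upgrades the numerical relation $(C+D''+K_{\bar{X}})\cdot D_i=0$ to the linear relation $C+D''+K_{\bar{X}}\sim 0$. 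Once this holds, $C+D''+K_{\bar{X}}$ meets every curve in $0$; combined with the connected-component structure, so that $D'$ is disjoint from $C$ and from $D''$ while $K_{\bar{X}}\cdot D_i=0$ makes each $D_i\subset D'$ a $(-2)$-curve, the three separate vanishings $C\cdot D_i=D''\cdot D_i=K_{\bar{X}}\cdot D_i=0$ follow.

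The main obstacle is precisely this integrality/effectivity step: showing that the a priori rational combination $\Theta$ has coefficients in $\{0,1\}$ and that the decomposition is by whole connected components of $D$, equivalently that some member of $|C+D+K_{\bar{X}}|$ is the reduced divisor $D'$. This is where the hypothesis $|C+D+K_{\bar{X}}|\neq\emptyset$ must be combined with negative definiteness, neither tool sufficing on its own.
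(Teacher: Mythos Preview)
The paper does not supply its own proof of this lemma; it is quoted verbatim from Zhang \cite[Lemma 2.1]{Zh} and used as a black box (only its Corollary~\ref{Sl1} is argued in the paper). So there is no in-paper argument to compare your sketch against.

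That said, your proposal is not yet a proof: you yourself flag the crucial step---that the unique $\qq$-combination $\Theta$ with $(C+K_{\bar X}+\Theta)\cdot D_i=0$ has all coefficients in $\{0,1\}$---and then do not carry it out. This gap is genuine and is the entire content of the lemma. Negative definiteness alone cannot force integrality: for instance, if some component $D_1$ met by $C$ were a $(-n)$-curve with $n\ge 3$ sitting in its own connected component, the orthogonality equation gives $\theta_1=(n-1)/n\notin\{0,1\}$; it is precisely the hypothesis $|C+D+K_{\bar X}|\neq\emptyset$ that rules such configurations out, and your sketch never explains how. The sentence ``$G$ may be taken equal to the reduced divisor $D'$'' is circular---you need to \emph{prove} that every effective member of $|C+D+K_{\bar X}|$ is supported on $D$, not assume it. Likewise the claim that $D'$ and $D''$ occupy distinct connected components, and that $K_{\bar X}\cdot D_i=0$ on $D'$, are consequences of the decomposition, not inputs you can invoke to establish it. A workable argument (as in Zhang's paper) analyzes an actual member $G\in|C+D+K_{\bar X}|$ directly: one controls the components of $G$ lying outside $D$ and then uses adjunction/connectedness on $C+D$ to pin down the shape of $D''$. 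Your linear-algebra-first approach, without that analysis of the linear system, cannot close the gap.
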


\begin{corollary}
\label{Sl1}
$|C+D+K_{\bar{X}}|=\emptyset$.
\end{corollary}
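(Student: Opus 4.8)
The plan is to argue by contradiction. Suppose $|C+D+K_{\bar X}|\neq\emptyset$. Then Lemma~\ref{Zhan1} applies and produces a decomposition $D=D'+D''$ with $C+D''+K_{\bar X}\sim 0$ and with $C\cdot D_i=D''\cdot D_i=K_{\bar X}\cdot D_i=0$ for every component $D_i$ of $D'$. My first step is to show that $D'$ must be empty. Indeed, each component $D_i$ of $D'$ is a smooth rational curve with $K_{\bar X}\cdot D_i=0$, so adjunction forces $D_i^2=-2$; such a curve is contracted by $\pi$ to a canonical (du Val) point and hence has discrepancy $0$, i.e. $\alpha_i=0$. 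This contradicts the standing hypothesis $0<\alpha_i<1$. Therefore $D'=0$ and $C+D+K_{\bar X}\sim 0$, that is, $-K_{\bar X}\sim C+D$.

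Next I would extract the consequences of this effective anticanonical class. Pushing forward by $\pi$ and using $\pi_*D=0$ gives $-K_{\tilde X}\sim\pi(C)$, so the anticanonical class of $\tilde X$ is represented by the single irreducible curve $\pi(C)$; in particular $C\not\subset D$. Intersecting $C+D+K_{\bar X}\sim 0$ with $\pi^*K_{\tilde X}=K_{\bar X}+D^\sharp$, and using $D\cdot\pi^*K_{\tilde X}=0$ together with the projection formula $K_{\bar X}\cdot\pi^*K_{\tilde X}=K_{\tilde X}^2$, I obtain $-C\cdot(K_{\bar X}+D^\sharp)=K_{\tilde X}^2=\deg\tilde X$. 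Thus the minimal positive value $a$ defining $C$ would be forced to equal $\deg\tilde X$, and $C+D$ would be a reduced anticanonical divisor whose image is the irreducible curve $\pi(C)$.

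Finally I would derive the contradiction from the real structure, and this is the step I expect to be the main obstacle. Since $K_{\bar X}$ and $D^\sharp$ are $\tau$-invariant and $\tau$ permutes the components of $D$ without fixing any — a $\tau$-invariant component would contract to a $\tau$-fixed, hence real, singular point, exactly as in the proof of \ref{DuVal} — applying $\tau^*$ to $-K_{\bar X}\sim C+D$ yields $\tau^*C\sim C$. If $\tau(C)\neq C$, then $C+D$ and $\tau(C)+D$ are distinct members of $|-K_{\bar X}|$ sharing the fixed part $D$, so the moving part is at least a pencil containing $C$, and resolving its base points produces a conic bundle $\phi\colon\bar X\to\pp^1$ through $C$, placing us in the setting of Lemmas~\ref{Zhan0} and~\ref{Pr1}; if $\tau(C)=C$, then $C+D$ is a $\tau$-invariant reduced anticanonical divisor. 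In either case the hard part is to convert ``$X$ has no real points'' into a concrete obstruction: one must exhibit a $\tau$-fixed point on $C$ or at a forced intersection $C\cap D$ (using $a=\deg\tilde X$), or else reach a numerical contradiction with $\#D$ being even. I expect the preceding conic-bundle lemmas to be precisely the tool for carrying out this final elimination.
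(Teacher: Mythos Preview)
Your proposal has a genuine gap at the final step, and the paper's own argument is much shorter than the route you start down.

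After invoking Lemma~\ref{Zhan1} to obtain $C+D''+K_{\bar X}\sim 0$, the paper does \emph{not} eliminate $D'$. It observes two things: (i) since $C+D''\sim -K_{\bar X}$ is a connected reduced anticanonical divisor made of rational curves, it is a wheel, so $D''$ is a single connected component of $D$; (ii) every component of $D'$ is a $(-2)$-curve, so $D'$ lies only over Du Val points. Hence $\tilde X$ has exactly one non--Du Val singular point $P$ (the one lying under $D''$). A unique such point is automatically $\tau$-invariant, so $P\in X$, contradicting the assumption that $X$ has no real points. That is the whole proof.

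Your elimination of $D'$ rests on the strict inequality $\alpha_i>0$, but the paper's own argument here explicitly allows $D'$ to be nonempty and Du Val; the condition ``$0<\alpha_i<1$'' in the setup should be read as $0\le\alpha_i<1$, so this step is not available to you. Even if one grants $D'=0$, you then have $C+D\sim -K_{\bar X}$ a wheel, hence $D$ is connected and $\tilde X$ has a single singular point, which is $\tau$-fixed and therefore real --- the same one-line finish. Your detour through the equality $-C\cdot(K_{\bar X}+D^\sharp)=K_{\tilde X}^2$, moving anticanonical pencils, and the conic-bundle lemmas is unnecessary, and, as you yourself say, you do not actually close the argument along that route. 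The missing idea is simply: the decomposition from Lemma~\ref{Zhan1} already isolates a \emph{unique} connected component of $D$ over a non--Du Val point, and uniqueness forces $\tau$-invariance.
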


\begin{proof}
Since $C+D''+K_{\bar{X}}\sim 0$, we see that $C+D''$ is connected. Moreover, since $C+D''$ consists of rational curves, we see that $C+D''$ is a wheel of curves. Hence $D''$ is connected component of $D$. Since $K_{\bar{X}}\cdot D_i=0$ for every irreducible component $D_i$ of $D'$, we see that $D'$ correspond to Du Val singularities. So, there exists a unique non-Du Val singular point $P\in\tilde{X}$. Hence $P\in X$, a contradiction.
\end{proof}

\begin{lemma}[{\cite[Lemma 2.2]{Zh}}]
\label{Zhan2}
Assume that $|C+D+K_{\bar{X}}|=\emptyset$. Then we may assume that $C$ is a $(-1)$-curve. Assume that $\tilde{C}$ meets irreducible component $D_1,D_2,\ldots D_m$ of $D$. Then $C\cdot D_i=1$ for $i=1,2,\ldots,m$ and $D_i$, $D_j$ lie in various connected components of $D$ for $i\neq j$.
\end{lemma}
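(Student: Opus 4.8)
The plan is to reduce the whole statement to a single cohomological computation — namely that the emptiness of $|C+D+K_{\bar{X}}|$ is equivalent to the vanishing $h^1(\oo_{C+D})=0$ — and then to read off the two conclusions combinatorially from the resulting nodal-tree structure of $C+D$.

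First I would justify the reduction to a $(-1)$-curve. Since $\pi$ is the minimal resolution and $-K_{\tilde{X}}$ is ample, the divisor $-(K_{\bar{X}}+D^{\sharp})=-\pi^{*}K_{\tilde{X}}$ is nef and big, so every curve pairs non-negatively with it. The value $a=-C\cdot(K_{\bar{X}}+D^{\sharp})>0$ then forces $C$ not to be $\pi$-exceptional (exceptional curves pair to $0$), whence $C\cdot D_i\ge 0$ and $-C\cdot K_{\bar{X}}=a+C\cdot D^{\sharp}\ge a>0$. Replacing $C$ by an irreducible component attaining the minimal positive value, adjunction gives $C^{2}+C\cdot K_{\bar{X}}=2p_a(C)-2\ge -2$ with $C\cdot K_{\bar{X}}<0$. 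When $C^{2}\ge 0$ the curve moves in at least a pencil and produces the fibration treated through Lemmas \ref{Zhan0} and \ref{Pr1}; so in the remaining case $C^{2}<0$, and the two negative integers $C^{2}$, $C\cdot K_{\bar{X}}$ must both equal $-1$, i.e. $C$ is a $(-1)$-curve with $p_a(C)=0$.

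The heart of the argument is the cohomological translation. Writing $B=C+D$ and using the adjunction isomorphism $\oo_{\bar{X}}(K_{\bar{X}}+B)|_{B}\cong\omega_{B}$, the ideal-sheaf sequence of $B$ twisted by $K_{\bar{X}}+B$ yields
\[
0\to H^{0}(\oo_{\bar{X}}(K_{\bar{X}}))\to H^{0}(\oo_{\bar{X}}(K_{\bar{X}}+B))\to H^{0}(\omega_{B})\to H^{1}(\oo_{\bar{X}}(K_{\bar{X}})).
\]
Because $\bar{X}$ is rational, $H^{0}(K_{\bar{X}})=H^{1}(K_{\bar{X}})=0$, so $H^{0}(K_{\bar{X}}+B)\cong H^{0}(\omega_{B})$, and Serre duality on the curve $B$ gives $h^{0}(\omega_{B})=h^{1}(\oo_{B})$. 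Hence the hypothesis $|C+D+K_{\bar{X}}|=\emptyset$ is exactly the statement $h^{1}(\oo_{C+D})=0$, which holds if and only if every connected component of $C+D$ is a tree of smooth rational curves meeting transversally in nodes, equivalently has arithmetic genus $0$.

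Finally I would extract the two assertions from this nodal-tree structure. All components are already rational ($D_i$ are resolution curves and $C$ is a $(-1)$-curve), so the only way to violate $h^{1}=0$ is to create a cycle or a non-nodal point in the dual graph. If $C\cdot D_i\ge 2$ for some $i$, then $C$ and $D_i$ meet either at two points or tangentially, and in both cases the local $\delta$-contribution raises $h^{1}(\oo_{C+D})$ above $0$; hence $C\cdot D_i=1$ for every $i$. Likewise, if two met components $D_i,D_j$ lay in the same connected component of $D$, the tree of $D$ would already contain a path joining them, and the two edges from $C$ would close a cycle, again forcing $h^{1}>0$; hence the $D_i$ lie in pairwise distinct connected components of $D$. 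I expect the main obstacle to be the first step — cleanly excluding the case $C^{2}\ge 0$ so that one may legitimately \emph{assume} $C$ is a $(-1)$-curve — since the cohomological computation itself is automatic once the vanishings $H^{0}(K_{\bar{X}})=H^{1}(K_{\bar{X}})=0$ are in hand, and the combinatorial deduction is then immediate.
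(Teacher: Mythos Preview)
The paper does not supply a proof of this lemma at all; it is quoted directly from \cite[Lemma~2.2]{Zh} and left unproved, so there is no in-paper argument to compare against.

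For what it is worth, your cohomological route is essentially the one Zhang uses in the cited source: on the smooth rational surface $\bar X$ one has $h^0(K_{\bar X})=h^1(K_{\bar X})=0$, so the ideal-sheaf sequence of $B=C+D$ twisted by $K_{\bar X}+B$ identifies $h^0(K_{\bar X}+C+D)$ with $h^0(\omega_B)=h^1(\oo_B)$, and the emptiness hypothesis becomes $p_a=0$ for every connected component of $C+D$; the two intersection constraints are then immediate from the arithmetic-genus formula for a reduced curve with smooth rational components. The one soft point is your disposal of the case $C^2\ge 0$. Deferring it to Lemmas~\ref{Zhan0} and~\ref{Pr1} is not accurate, since those results describe the fibre structure of an already-given conic bundle rather than ruling out $C^2\ge 0$ under the hypothesis $|C+D+K_{\bar X}|=\emptyset$. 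In Zhang's original this case is not excluded but appears as a genuine alternative conclusion ($C$ is a smooth fibre of a $\pp^1$-bundle and the $h^1=0$ condition forces the horizontal part of $D$ to consist of disjoint sections); the phrase ``we may assume'' in the present paper reflects that this alternative is irrelevant for the application at hand, not that the hypothesis alone forbids it.
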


\begin{lemma}
\label{In1}
There no exists a $\tau$-invariant $(-1)$-curve $E$.
\end{lemma}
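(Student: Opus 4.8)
The plan is to exploit the hypothesis that $X$ has no real points by transferring it to the minimal resolution and then using the arithmetic of conics over $\rr$. First I would record that the minimal resolution $\pi\colon\bar{X}\rightarrow\tilde{X}$ is canonical, so the Galois involution $\tau$ lifts to $\bar{X}$ and $\pi$ becomes $\tau$-equivariant. Consequently any $\tau$-fixed (i.e.\ real) point of $\bar{X}$ maps to a real point of $\tilde{X}$, which is the same as a real point of $X$; since $X$ has none, $\bar{X}$ has no real points at all. In particular, a hypothetical $\tau$-invariant $(-1)$-curve $E$ would satisfy $E(\rr)=\emptyset$, since $E(\rr)\subseteq\bar{X}(\rr)$.

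Next I would identify the real curve $E$. Being a $(-1)$-curve, $E$ is a smooth rational curve, so as a curve over $\rr$ it is a smooth conic (a one-dimensional Severi–Brauer variety). Such a conic either has a real point, in which case it is isomorphic to $\pp^1$ over $\rr$, or has no real point, in which case it is the anisotropic conic $\{x_0^2+x_1^2+x_2^2=0\}$. Since $E(\rr)=\emptyset$, the curve $E$ must be the anisotropic conic; equivalently, every closed point of $E$ has residue field $\cc$ and hence even degree over $\cc$ after base change.

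The crux is then a parity obstruction on the normal bundle. Because $E$ is $\tau$-invariant, the line bundle $\oo_{\bar{X}}(E)$ is defined over $\rr$, and so its restriction $\oo_{\bar{X}}(E)|_E\cong N_{E/\bar{X}}$ is a real line bundle on the anisotropic conic $E$. Every real divisor on $E$ is a $\zz$-combination of closed points of even degree, so every real line bundle on $E$ has even degree when pulled back to $E_{\cc}\cong\pp^1$. On the other hand, that degree is exactly the self-intersection $E^2=-1$, which is odd. This contradiction shows that no $\tau$-invariant $(-1)$-curve exists.

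The step I expect to demand the most care is the parity computation: one must argue cleanly that a line bundle defined over $\rr$ on a conic without real points has even $\cc$-degree. The conceptual reason is that the anisotropic conic is a nontrivial Severi–Brauer variety, so that the restriction map $\Pic(E)\rightarrow\Pic(E_{\cc})=\zz$ has image $2\zz$: a degree-one invariant class fails to descend, its obstruction being the nonzero class in $\operatorname{Br}(\rr)=\zz/2$. I would therefore state precisely both the $\tau$-equivariance of $\pi$ and the reality of $N_{E/\bar{X}}$, since these are exactly what license the passage between the complex intersection number $E^2=-1$ and the even-degree constraint forced by the absence of real points.
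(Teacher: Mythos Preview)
Your proof is correct and takes a genuinely different route from the paper's. The paper argues in one line: contract $E$ via $f\colon\bar{X}\to Y$; the image $f(E)$ is a single $\tau$-fixed point, hence $Y$ acquires an $\rr$-point, and this is declared a contradiction. Implicit in that last step is that $\bar{X}\to Y$ is then the blow-up of a smooth $\rr$-point, whose exceptional divisor is $\pp^1_{\rr}$ and therefore carries $\rr$-points, contradicting $\bar{X}(\rr)=\emptyset$. Your argument bypasses the contraction entirely: you stay on $\bar{X}$, identify $E$ as the anisotropic conic, and invoke the parity obstruction $\operatorname{im}\bigl(\Pic(E)\to\Pic(E_{\cc})\bigr)=2\zz$ to rule out a real normal bundle of degree $-1$.

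Both arguments ultimately rest on the same dichotomy for real forms of $\pp^1$, but they exploit it differently. The paper's approach is quicker for a reader comfortable with Castelnuovo over non-closed fields and willing to fill in why ``$Y$ has a real point'' is contradictory. Your approach is more self-contained: it never leaves $\bar{X}$, makes the Brauer obstruction explicit, and would generalize verbatim to any $\tau$-invariant curve of odd self-intersection, not just $(-1)$-curves. The price is that you must justify descent of $\oo_{\bar{X}}(E)$ to $\rr$; you do this correctly by noting that $E$ is a $\tau$-invariant \emph{effective divisor} (not merely a $\tau$-invariant class), so the line bundle descends without a Brauer obstruction of its own.
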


\begin{proof}
Assume that there exists a $\tau$-invariant $(-1)$-curve $E$. Let $f\colon\bar{X}\rightarrow Y$ be the contract of $E$. Then $Y$ has a point over $\rr$. A contradiction.
\end{proof}

\begin{corollary}
\label{In2}
The curve $C$ is not $\tau$-invariant curve.
\end{corollary}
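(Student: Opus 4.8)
The plan is to obtain the statement as an immediate consequence of Lemma \ref{In1}, once the curve $C$ has been normalized to a $(-1)$-curve. First I would invoke Corollary \ref{Sl1}, which tells us that $|C+D+K_{\bar{X}}|=\emptyset$. This is precisely the hypothesis required by Lemma \ref{Zhan2}, so that lemma applies and I may assume that $C$ is a $(-1)$-curve. This reduction is the only substantive input; everything after it is formal.

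Next I would argue by contradiction. Suppose $C$ is $\tau$-invariant. Since $\tau$ is an automorphism of $\bar{X}$, it preserves the intersection form and fixes $K_{\bar{X}}$, and it carries the exceptional divisor $D$ of the minimal resolution to itself (the singular locus of $\tilde{X}$ is $\tau$-stable); in particular the self-intersection number and genus of $C$ are unchanged, so a $\tau$-invariant $C$ is still a $(-1)$-curve. Thus $C$ would be a $\tau$-invariant $(-1)$-curve on $\bar{X}$. But Lemma \ref{In1} asserts that no such curve exists. This contradiction shows that $C$ is not $\tau$-invariant, which is the assertion of the corollary.

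I do not expect a genuine obstacle here: the content has already been placed in Corollary \ref{Sl1} and Lemmas \ref{Zhan2} and \ref{In1}, and the corollary is a one-line synthesis of them. The only point deserving a moment of care is the legitimacy of replacing the minimizing curve $C$ by a $(-1)$-curve; this is exactly what the chain "Corollary \ref{Sl1} $\Rightarrow$ hypothesis of Lemma \ref{Zhan2}" licenses, so no additional argument is needed. Once that reduction is in hand, the appeal to Lemma \ref{In1} closes the proof.
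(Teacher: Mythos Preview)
Your argument is correct and matches the paper's intended reasoning: the corollary is stated without proof immediately after Lemma~\ref{In1}, with the reduction to a $(-1)$-curve supplied by Corollary~\ref{Sl1} and Lemma~\ref{Zhan2} exactly as you outline. There is nothing to add.
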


\section{Proof of main Theorem}

Let $X$ be a real del Pezzo surface without points. Put $\tilde{X}$ is the same surface over complex numbers field $\cc$, $\tau$ is the Galois involution. Assume that $\rho(\tilde{X})=1$.
Put $P_1, P_2,\ldots, P_n$ are singular points of $\tilde{X}$. By {\cite{Bel1}} we see that $n\leq 4$. Since there no exists $\tau$-invariant points, we see that $n=2$ or $n=4$.
Assume that $n=4$ and $P_2=\tau(P_1)$, $P_4=\tau(P_3)$. Then $P_1, P_2$ and $P_3, P_4$ has the same type of singularities. On the other hand, by {\cite{Bel2}} we see that this is impossible. So, we may assume that $\tilde{X}$ has two singular points $P_1, P_2$ and both of them are non-Du Val (see \ref{DuVal}).

By \ref{In2} we see that $C$ is not $\tau$-invariant. Put $C'=\tau(C)$.

\begin{lemma}
\label{Int1}
The curve $C$ does not meet $C'$.
\end{lemma}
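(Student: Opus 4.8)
The plan is to argue by contradiction, exploiting that $\tau$ acts \emph{freely} on $\bar{X}$. First I would record that $C'=\tau(C)$ is again a $(-1)$-curve realizing the minimal positive value $a=-C\cdot(K_{\bar{X}}+D^{\sharp})$: since $\tau$ is an automorphism of $\bar{X}$ over $\cc$ that permutes the exceptional components $D_i$ and preserves their discrepancies, we have $\tau^{*}(K_{\bar{X}}+D^{\sharp})=K_{\bar{X}}+D^{\sharp}$, so $-C'\cdot(K_{\bar{X}}+D^{\sharp})=a$ and $C'$ is a $(-1)$-curve by Lemma \ref{Zhan2}. Because $X$ has no real points and $\tau$ swaps the two singular points $P_1,P_2$, the map $\pi$ being $\tau$-equivariant forces $\tau$ to have no fixed point on $\bar{X}$ either. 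Now suppose for contradiction that $C\cdot C'\geq 1$.

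The engine is the following elementary observation. Any point $Q\in C\cap C'$ satisfies $\tau(Q)\in\tau(C)\cap\tau(C')=C'\cap C=C\cap C'$, so $\tau$ permutes the finite set $C\cap C'$, preserving intersection multiplicities. Since $\tau$ has no fixed point, the points of $C\cap C'$ pair up under $\tau$, each pair carrying equal multiplicity; hence the total intersection number $C\cdot C'$ is \emph{even}. In particular a single transverse intersection is impossible, for its unique point would be $\tau$-fixed, i.e.\ a real point of $X$. Thus if $C$ meets $C'$ at all, then $C\cdot C'\geq 2$, and it remains to exclude this.

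To exclude $C\cdot C'\geq 2$ I would use Hodge index (Theorem \ref{Hodge}). Set $L=-\pi^{*}K_{\tilde{X}}=-(K_{\bar{X}}+D^{\sharp})$, which is nef and big with $L^{2}=\deg\tilde{X}=:d>0$ and $L\cdot D_i=0$ for all $i$. Since $\rho(\bar{X})=1+\#D$, the classes $L$ and the $D_i$ give an orthogonal splitting $\mathrm{NS}(\bar{X})_{\rr}=\rr L\oplus\langle D_i\rangle_{\rr}$ with $\langle D_i\rangle_{\rr}$ negative definite. Writing $C=\tfrac{a}{d}L+E$ with $E\in\langle D_i\rangle_{\rr}$, and using $\tau^{*}L=L$ together with $(\tau^{*}E)^{2}=E^{2}$, I get
\[
C\cdot C'=\frac{a^{2}}{d}+E\cdot\tau^{*}E\leq \frac{a^{2}}{d}-E^{2}=\frac{2a^{2}}{d}+1,
\]
by the Cauchy--Schwarz inequality for the negative definite form and $C^{2}=\tfrac{a^{2}}{d}+E^{2}=-1$. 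By Lemma \ref{Zhan2} the curve $C$ meets the components $D_i$ transversally, so $a=1-\sum_{i}\alpha_i\leq 1$, with strict inequality once $C$ meets $D$; hence $C\cdot C'<1+2/d$. For $d\geq 2$ this yields $C\cdot C'\leq 1$, contradicting $C\cdot C'\geq 2$ and completing the argument in that range.

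The main obstacle is the low-degree case $d<2$ (rational $d$ is allowed for log del Pezzo surfaces), where the Hodge-index bound no longer forces $C\cdot C'\leq 1$. Here I would instead extract from the $\tau$-invariant class $C+C'$ a $\tau$-invariant nef class of self-intersection $0$ and the resulting conic bundle $\phi\colon\bar{X}\rightarrow\pp^{1}$, then run the fiber classification of Lemmas \ref{Zhan0} and \ref{Pr1}: a reducible $\tau$-invariant fiber either produces a $\tau$-invariant $(-1)$-curve, contradicting Lemma \ref{In1}, or exhibits a $\tau$-fixed intersection point, again a real point of $X$. Closing this last case cleanly, rather than the generic degree bound, is where the real work lies.
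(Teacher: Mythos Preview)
Your Hodge-index bound is a pleasant idea and the inequality $C\cdot C'\leq 1+2a^{2}/d$ is correct, but the argument is genuinely incomplete. You yourself flag that the case $d=K_{\tilde X}^{2}<2$ (more precisely $d\leq 2a^{2}$) is ``where the real work lies,'' and your sketch for it does not go through. If $C\cdot C'\geq 2$ then $(C+C')^{2}=-2+2\,C\cdot C'\geq 2$, so $C+C'$ is \emph{not} a class of self-intersection $0$ and there is no evident way to extract a $\tau$-invariant conic bundle from it; the only way $(C+C')^{2}=0$ is $C\cdot C'=1$, which you have already excluded by parity. Nor is there any a priori lower bound on $d$ in the situation at hand (two non--Du Val log terminal points of the same type on a Picard-rank-one surface), so this range cannot simply be dismissed. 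Even granting some $\tau$-equivariant fibration, your concluding dichotomy (``$\tau$-invariant $(-1)$-curve or $\tau$-fixed node in a fiber'') does not by itself contradict $C\cdot C'\geq 2$ without further linkage to $C$ and $C'$.

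For comparison, the paper's proof is organized quite differently and does not use a degree bound at all. When $C\cdot C'=2$ it applies Riemann--Roch to $|C+C'+K_{\bar X}|$ and the minimality of $a=-C\cdot(K_{\bar X}+D^{\sharp})$ to force any member to be $\Gamma+\Delta$ with $\Supp(\Delta)\subset\Supp(D)$ and $\Gamma$ a $\tau$-invariant $(-1)$-curve (contradicting Lemma~\ref{In1}), or else $C+C'+K_{\bar X}\sim 0$, which is killed by intersecting with a component of $D$. When $C\cdot C'\geq 4$ it shows $\pi(C)^{2}\geq 4$ and then runs a case analysis on which components $D_i$ the curve $C$ meets, tracking the coefficients $\beta_i$ in $\pi^{*}\pi(C)$ and repeatedly invoking the conic bundles $|2C+\cdots|$ together with Lemma~\ref{Pr1} to force one of the two singular points to be Du Val, a contradiction. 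If you want to salvage your approach, the $C\cdot C'=2$ case can be handled exactly as the paper does via $|C+C'+K_{\bar X}|$; it is the $C\cdot C'\geq 4$ range with small $d$ that still needs a genuine argument.
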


\begin{proof}
Assume that $C$ meets $C'$. Then $C\cdot C'=2m$. Assume that $m=1$. By Riemann--Roch theorem $|C+C'+K_{\bar{X}}|\neq\emptyset$. Let $\sum n_i E_i\in |C+C'+K_{\bar{X}}|$. Put $$C\equiv C'\equiv-a(K_{\bar{X}}+D^{\sharp})(\mod D),$$ $$E_i\equiv -e_i(K_{\bar{X}}+D^{\sharp})(\mod D).$$ Note that $e_i\geq a>0$ if $E_i$ is not a component of $D$ and $e_i=0$ if $E_i$ is a component of $D$. Then $$1-2a=-\sum e_i n_i\leq a(-\sum n_i).$$ So, $1\leq (2-\sum n_i)a$. Hence, $\sum n_i=1$. So, every divisor in $|C+C'+K_{\bar{X}}|$ is $\Gamma+\Delta$, where $\Gamma$ is a curve that not a component of $D$ and $\Supp(\Delta)\subset \Supp(D)$. Since $\Gamma\cdot C=\Gamma\cdot C'=0$, we see that \[\Gamma^2\leq\Gamma\cdot(\Gamma+\Delta)=\Gamma\cdot(C+C'+K_{\bar{X}})=\Gamma\cdot K_{\bar{X}}\leq\Gamma\cdot(K_{\bar{X}}+D^{\sharp})<0.\] Hence, $\Gamma$ is a $(-1)$-curve. Moreover, $\tau(\Gamma)=\Gamma$. A contradiction with lemma \ref{In1}. So,  $C+C'+K_{\bar{X}}\sim \Delta$, where $\Supp(\Delta)\subset \Supp(D)$. Note that \[0\leq(C+C'+K_{\bar{X}})\cdot\Delta=\Delta^2\leq 0.\] Then $C+C'+K_{\bar{X}}\sim 0$. On the other hand, $C$ meets a component $D_1$ of $D$. So, $0<(C+C'+K_{\bar{X}})\cdot D_1=0$. A contradiction.

Assume that $m\geq 2$. Since $\pi(C)\sim\pi(C')$, we see that $\pi(C)^2\geq 4$. Assume that $\pi(C)$ passes through only one singular point $P_1$ and $\pi(C')$ passes through singular point $P_2$.
Let $\pi^*(\pi(C))=C+\sum\beta_i D_i$. Assume that $C$ meets $D_1$. Since $\pi(C)^2\geq 4$ and $C$ is a $(-1)$-curve, we see that $\beta_1\geq 5$. Assume that $D_k$ is a $(-n)$-curve ($n\geq 3$) in $\pi^*(\pi(C))$. Since, $$0<\pi(C)\cdot(-K_{\tilde{X}})=(C+\sum\beta_i D_i)\cdot(-K_{\bar{X}}+D^\sharp),$$ we see that $\beta_k\leq\frac{1}{n-2}$, where $\beta_k$ is the coefficient before $D_k$ in $\pi^*(\pi(C))$. Hence, $D_1$ is a $(-2)$-curve.
Assume that $D_1$ meets a $(-m)$-curve ($m\geq 3$) $D_2$. Then the coefficient before $D_2$ in $\pi^*(\pi(C))$ is at least $\frac{\beta_1}{3}>1$, a contradiction. So, $D_1$ meets only $(-2)$-curves in $D$. Assume that $D_1$ meets two components $D_2, D_3$ of $D$. We see that $D_2, D_3$ are $(-2)$-curves. Let $\phi\colon\bar{X}\rightarrow\pp^1$ be a conic bundle defined by $|2C+2D_1+D_2+D_3|$. By Lemma \ref{Pr1} we see that every $(-m)$-curve ($m\geq 3$) is not in fiber $\phi$. Then the singular point $P_2$ is Du Val, a contradiction. Then $D_1$ meets only one component $D_2$ of $D$. Put $\beta_2$ is the coefficient before $D_2$ in $\pi^*(\pi(C))$. Note that $\beta_2>\beta_1\geq 5$. As above, $D_2$ does not meet $(-n)$-curves ($n\geq 3$). We obtain a rod of $(-2)$-curves $D_1, D_2,\ldots, D_k$ and $D_k$ meets two components $D_{k+1}, D_{k+2}$ of $D$. Put $\beta_1,\beta_2,\ldots,\beta_k$ are the coefficients before $D_1, D_2,\ldots, D_k$ in $\pi^*(\pi(C))$. Note that $\beta_k>\beta_{k-1}>\cdots\beta_2>\beta_1\geq 5$. Then $D_{k+1}, D_{k+2}$ are $(-2)$-curves. Let $\phi\colon\bar{X}\rightarrow\pp^1$ be a conic bundle defined by $|2C+2D_1+2D_2+\cdots+2D_k+D_{k+1}+D_{k+2}|$. By Lemma \ref{Pr1} we see that every $(-m)$-curve ($m\geq 3$) is not in fiber $\phi$. Then the singular point $P_2$ is Du Val, a contradiction.

So, we may assume that $C$ meets two components $D_1$ and $D_2$ of $D$. Then $C'$ meets also two components $D'_1$ and $D'_2$ of $D$. Put $\pi^*(\pi(C))=C+\sum\beta_i D_i$. Assume that $D_1=D'_1$ and $D_2=D'_2$. Note that $\pi(C)^2=\pi(C)\cdot\pi(C')=2m+\beta_1+\beta_2$. On the other hand, $\pi(C)^2=-1+\beta_1+\beta_2$, a contradiction. So, $D_1\neq D'_1$ and $D_2\neq D'_2$.

Assume that $D_1^2=D^2_2=-2$. Let $\phi\colon\bar{X}\rightarrow\pp^1$ be a conic bundle defined by $|2C+D_1+D_2|$. By Lemma \ref{Pr1} we see that every $(-n)$-curve ($n\geq 3$) meets $D_1$ or $D_2$. Since $P_1$ and $P_2$ have the same type of non-Du Val singularities, we see that there exist $(-n)$-curves ($n\geq 3$) $D_3$ and $D_4$ such that $D_3\cdot D_1=1$ and $D_4\cdot D_2=1$. Let $\beta_3,\beta_4$ be the coefficients before $D_3$ and $D_4$ in $\pi^*(\pi(C))=C+\sum\beta_i D_i$. Since $$0<\pi(C)\cdot(-K_{\tilde{X}})=(C+\sum\beta_i D_i)\cdot(-K_{\bar{X}}+D^\sharp),$$ we see that $\beta_3+\beta_4\leq\frac{1}{n-2}$. Since $0=\pi^*(\pi(C))\cdot D_3=\pi^*(\pi(C))\cdot D_4$, we see that $-n\beta_3+\beta_1+a=0$ and $-n\beta_4+\beta_2+a=0$, where $a\geq 0$. Then $\beta_1+\beta_2\leq\frac{n}{n-2}$. Hence, $\pi(C)^2=-1+\beta_1+\beta_2\leq 3$, a contradiction.

Assume that $D_1^2=-2$, $D_2^2=-n\leq-3$. As above, $\beta_2<\frac{1}{n-2}$ and $D_1$ does not meet $(-m)$-curves, where $m\geq 3$. Assume that $D_1$ meets two components $D_3$ and $D_4$. Then $D_3^2=D_4^2=-2$. Let $\phi\bar{X}\rightarrow\pp^1$ be a conic bundle defined by $|2C+2D_1+D_3+D_4|$. By Lemma \ref{Pr1} we see that every $(-m)$-curve ($m\geq 3$) is not in fiber $\phi$. Since $P_1$ and $P_2$ are have the same type of non-Du Val singularities, we see that $D_3$ or $D_4$ meets a $(-n)$-curve $D_5$. We may assume that $D_5$ meets $D_3$ Let $\beta_3,\beta_5$ be the coefficient before $D_3,D_5$ in $\pi^*(\pi(C))$ correspondingly. We have $\beta_3\leq 3\beta_5$. Then $\beta_1\leq5\beta_5$. As above, $\beta_5+\beta_2<\frac{1}{n-2}$. Hence, $$\pi(C)^2=-1+\beta_1+\beta_2\leq -1+5\beta_5+\beta_2< -1+4\beta_5+\frac{1}{n-2}<-1+\frac{5}{n-2}\leq 4,$$ a contradiction. So, we may assume that $D_1$ meets only one component $D_3$ of $D$. Let $\beta_3$ be the coefficient of $D_3$ in $\pi^*(\pi(C))$. Note that $$0=(C+\sum\beta_i D_i)\cdot D_1=1-2\beta_1+\beta_3.$$ Since $\beta_1>4$, we see that the coefficient $\beta_3\geq 2\beta_1-1>7$. Then $D_3$ is a $(-2)$-curve. Assume that $D_3$ meets only one component $D_4$ of $D$. Let $\beta_4$ be the coefficient of $D_4$ in $\pi^*(\pi(C))$. Since $\beta_3>7$, we see that the coefficient $\beta_4>\beta_3>4$. Then $D_4$ is a $(-2)$-curve. And so on. We obtain a rod of $(-2)$-curves. In the end there exists a $(-2)$-curves $D_i$ that meets three curves $D_{i-1}$, $D_{i+1}$ and $D_{i+2}$, where $D_{i-1}$ is a $(-2)$-curve. Assume that $D_{i+1}$ is a $(-n)$-curve ($n\geq 3$). Then $\beta_{i+1}\geq\frac{\beta_i}{n}>\frac{7}{n}$. On the other hand, $$0<\pi(C)\cdot(-K_{\tilde{X}})=(C+\sum\beta_i D_i)\cdot(-K_{\bar{X}}+D^\sharp)<1-(n-2)\frac{7}{n}<0,$$ a contradiction. So, $D_{i+1}$ and $D_{i+2}$ are $(-2)$-curves. Let $\phi\bar{X}\rightarrow\pp^1$ be a conic bundle defined by $$|2C+2D_1+2D_3+2D_4+\cdots+2D_i+D_{i+1}+D_{i+2}|.$$ By Lemma \ref{Pr1} we see that every $(-m)$-curve ($m\geq 3$) is not in fiber $\phi$. Then there exists a $(-n)$-curve ($n\geq 3$) $D_{i+3}$ that meets $D_{i+1}$ or $D_{i+2}$. We may assume that $D_{i+3}$ meets $D_{i+1}$. We have $$0=(C+\sum\beta_i D_i)\cdot D_{i+1}=\beta_i-2\beta_{i+1}+\beta_{i+3}.$$ Then $\beta_{i+1}>\frac{\beta_i}{2}>\frac{7}{2}$. Note that $-n\beta_{i+3}+\beta_{i+1}<0$. Hence, $\beta_{i+3}>\frac{7}{2n}$. On the other hand, $$0<\pi(C)\cdot(-K_{\tilde{X}})=(C+\sum\beta_i D_i)\cdot(-K_{\bar{X}}+D^\sharp)<1-(n-2)\frac{7}{2n}<0,$$ a contradiction.
\end{proof}

\begin{corollary}
\label{Int2}
The curves $\pi(C)$ and $\pi(C')$ passes through two singular points $P_1$ and $P_2$ of $\bar{X}$.
\end{corollary}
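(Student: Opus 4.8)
The plan is to exploit the standing hypothesis $\rho(\tilde X)=1$, which forces both $\pi(C)$ and $\pi(C')=\tau(\pi(C))$ to be nonzero effective divisors on a Picard–number–one surface, hence positive multiples of the ample generator $H$ of $N^1(\tilde X)_{\rr}\cong\rr$. In particular $\pi(C)\cdot\pi(C')>0$. I would play this positivity against Lemma \ref{In1}'s companion result Lemma \ref{Int1}, which gives $C\cdot C'=0$, to control through how many singular points $\pi(C)$ can pass.

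First I would fix the local picture. By Corollary \ref{Sl1} we have $|C+D+K_{\bar X}|=\emptyset$, so Lemma \ref{Zhan2} applies and we may take $C$ to be a $(-1)$-curve meeting the components of $D$ it hits transversally, those components lying in pairwise distinct connected components of $D$. Since $\tilde X$ carries exactly the two non-Du Val points $P_1,P_2$, the divisor $D$ splits into two connected components $D^{(1)},D^{(2)}$ lying over $P_1,P_2$, and $\pi(C)$ passes through $P_j$ precisely when $C$ meets $D^{(j)}$. Thus it suffices to show that $C$ meets \emph{both} components; applying the automorphism $\tau$ (which interchanges $D^{(1)}$ and $D^{(2)}$) then yields the same statement for $C'$.

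The heart of the matter is a pullback computation. Writing $\pi^*\pi(C)=C+\sum_i\beta_i D_i$, which is well defined because a log terminal surface is $\qq$-factorial, the projection formula gives $\pi(C)\cdot\pi(C')=C'\cdot\pi^*\pi(C)=C\cdot C'+\sum_i\beta_i(D_i\cdot C')$, and the first term vanishes by Lemma \ref{Int1}. Suppose, for contradiction, that $\pi(C)$ misses $P_2$, i.e. $C$ does not meet $D^{(2)}$. Then for every $D_j$ in $D^{(2)}$ the relation $0=\pi^*\pi(C)\cdot D_j=\sum_{i\in D^{(2)}}\beta_i(D_i\cdot D_j)$, together with the negative-definiteness of the intersection matrix of $D^{(2)}$, forces $\beta_i=0$ for all $i\in D^{(2)}$. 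Now $C'=\tau(C)$ meets only $D^{(2)}$, so every surviving term $\beta_i(D_i\cdot C')$ with $i\in D^{(1)}$ also vanishes, whence $\pi(C)\cdot\pi(C')=0$, contradicting $\pi(C)\cdot\pi(C')>0$. The case in which $C$ meets no component of $D$ is even quicker: there $\pi$ is an isomorphism near $C$, so $\pi(C)^2=C^2=-1<0$, impossible for a positive multiple of $H$. Hence $C$ meets both $D^{(1)}$ and $D^{(2)}$, and by symmetry so does $C'$, which is exactly the assertion.

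The step I expect to be the main obstacle is the bookkeeping in the ``exactly one point'' case: justifying that the coefficients $\beta_i$ over the missed point vanish, and that the remaining cross terms drop out because $C'$ is confined to the opposite connected component. This rests squarely on the negative-definiteness of each local intersection matrix and on the $\tau$-equivariance sending $D^{(1)}$ to $D^{(2)}$. The positivity input $\pi(C)\cdot\pi(C')>0$ is, by contrast, immediate from $\rho(\tilde X)=1$ together with the fact that $\pi(C)$ is a genuine (non-exceptional, hence nonzero effective) curve, so no real difficulty should arise there.
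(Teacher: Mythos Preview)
Your argument is correct and is exactly the way this corollary is meant to be read off from Lemma~\ref{Int1}: the paper states it without proof because, once $C\cdot C'=0$, the positivity $\pi(C)\cdot\pi(C')>0$ forced by $\rho(\tilde X)=1$ together with the pullback/projection-formula computation you wrote down immediately rules out the possibility that $C$ (hence $C'=\tau(C)$) meets only one connected component of $D$. Your handling of the two degenerate cases (no component met; only one component met) via negative-definiteness of the exceptional intersection matrix and the $\tau$-swap of $D^{(1)},D^{(2)}$ is precisely the intended mechanism.
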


\begin{lemma}
Let $D^{(1)}$ and $D^{(2)}$ be the exception divisor of $\pi$ over $P_1$ and $P_2$ correspondingly. Then $C+C'+D^{(1)}+D^{(2)}+K_{\tilde{X}}\sim 0$.
\end{lemma}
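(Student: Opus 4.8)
The statement as printed reads $C+C'+D^{(1)}+D^{(2)}+K_{\tilde X}\sim 0$, but $C,C',D^{(1)},D^{(2)}$ all live on $\bar X$, so I read it as the assertion $L:=C+C'+D^{(1)}+D^{(2)}+K_{\bar X}\sim 0$ on $\bar X$ (its pushforward $\pi(C)+\pi(C')+K_{\tilde X}\sim 0$ on $\tilde X$ is then immediate, since $\pi_\ast D^{(i)}=0$). The plan is to reduce $L\sim 0$ to numerical triviality: because $\bar X$ is a smooth rational surface, $\operatorname{Pic}(\bar X)=\operatorname{NS}(\bar X)$ is torsion-free and numerical and linear equivalence coincide, so it suffices to check $L\cdot Z=0$ for $Z$ ranging over a basis of $\operatorname{NS}(\bar X)_\qq$. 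As $\rho(\tilde X)=1$, the classes $\{D_i\}\cup\{C\}$ form such a basis: the $D_i$ are the $\pi$-exceptional curves, and $C$ is independent of them because $\pi_\ast C=\pi(C)\neq 0$.

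First I would record the local shape of the configuration. By Corollary \ref{Sl1} we have $|C+D+K_{\bar X}|=\emptyset$, so Lemma \ref{Zhan2} lets me take $C$ to be a $(-1)$-curve, and by Corollary \ref{Int2} the image $\pi(C)$ passes through both $P_1$ and $P_2$; hence Lemma \ref{Zhan2} forces $C$ to meet exactly one component $A_1$ of $D^{(1)}$ and one component $A_2$ of $D^{(2)}$, each transversally ($C\cdot A_1=C\cdot A_2=1$). Applying $\tau$, which swaps $P_1,P_2$ (hence $D^{(1)},D^{(2)}$) and fixes $K_{\bar X}$, gives the symmetric statement for $C'=\tau(C)$, and we recall $C\cdot C'=0$ by Lemma \ref{Int1}. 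With these data $L\cdot C=C^2+C\cdot C'+C\cdot D+K_{\bar X}\cdot C=-1+0+2-1=0$, and by $\tau$-invariance of $L$ also $L\cdot C'=0$.

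The heart of the matter is the family of equalities $L\cdot D_i=0$. By adjunction $(K_{\bar X}+D)\cdot D_i=\deg_D(D_i)-2$, where $\deg_D(D_i)$ denotes the valence of $D_i$ in the (tree-shaped) dual graph of its connected component, so $L\cdot D_i=\bigl(\deg_D(D_i)-2\bigr)+(C+C')\cdot D_i$. This vanishes for every $i$ \emph{exactly} when each of $D^{(1)},D^{(2)}$ is a chain and $C,C'$ attach to its two distinct endpoints: interior vertices then contribute $2-2+0$, the two endpoints contribute $1-2+1$, whereas a branch vertex (valence $\ge 3$) or an endpoint missed by $C,C'$ would give a nonzero value. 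Establishing precisely this configuration is the main obstacle; I would extract it from the conic-bundle analysis of Lemmas \ref{Zhan0} and \ref{Pr1} together with the classification of the two coincident non-Du Val singular points of \cite{Bel1} and \cite{Bel2}, which pins the resolution graphs down to rods with the required attachment points.

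As an independent route that also isolates the obstacle, I would first show $L$ is effective by Riemann--Roch: $h^2(L)=h^0(-C-C'-D)=0$, and since the dual graph of $D$ is a forest with two components (hence $\#D-2$ edges) one computes $L\cdot(L-K_{\bar X})=0$, so $\chi(L)=1$ and $h^0(L)\ge 1$. For $\Xi\in|L|$ write $\Xi=\sum n_i\Gamma_i+\Delta$ with $\Gamma_i\not\subset\Supp(D)$ and $\Delta\ge 0$ supported on $D$. The $\tau$-symmetric form of Corollary \ref{Sl1} forbids $C$ or $C'$ as a component of $\Xi$, so from $\Xi\cdot C=L\cdot C=0$ every term vanishes; meanwhile minimality of $a=-C\cdot(K_{\bar X}+D^{\sharp})$ and nefness of $\pi^\ast(-K_{\tilde X})=-(K_{\bar X}+D^{\sharp})$ give $2a-\deg\tilde X=\Xi\cdot\pi^\ast(-K_{\tilde X})\ge(\sum n_i)\,a$. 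Thus the horizontal part disappears the moment one knows $2a\le\deg\tilde X$, after which negative-definiteness of the $D_i$ (with the valence computation above) forces $\Delta=0$ and hence $L\sim 0$. The delicate point throughout, equivalent to $\pi(C)+\pi(C')\equiv -K_{\tilde X}$, is exactly this inequality $2a\le\deg\tilde X$, i.e.\ pseudo-effectivity of $-K_{\tilde X}-\pi(C)-\pi(C')$: the reverse inequality $2a\ge\deg\tilde X$ is free from effectivity and nefness, and it is this half that the precise geometry of the two equal singular points must supply.
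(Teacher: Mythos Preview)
Your first route is circular. The assertion that each $D^{(j)}$ is a chain with $C$ and $C'$ attached at its two endpoints is exactly Corollary~\ref{Wheel1}, which the paper derives \emph{from} this lemma; nothing proved up to this point gives you the chain shape, so you cannot use it to verify $L\cdot D_i=0$.

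Your second route is close to the paper's argument, but you have misread where the difficulty lies and thereby missed the step that actually closes it. Your own inequality
\[
2a-\deg\tilde X=\Xi\cdot\pi^\ast(-K_{\tilde X})\ \ge\ \Bigl(\textstyle\sum n_i\Bigr)\,a
\]
already forces $\sum n_i\le 1$: if $\sum n_i\ge 2$ then $2a-\deg\tilde X\ge 2a$, hence $\deg\tilde X\le 0$, impossible. No inequality $2a\le\deg\tilde X$ is needed. The remaining case $\sum n_i=1$ is \emph{not} handled numerically; this is the idea you are missing. One chooses a $\tau$-invariant member of the (real) linear system, so the unique horizontal component $\Gamma$ is $\tau$-invariant, and then shows $\Gamma$ is a $(-1)$-curve, contradicting Lemma~\ref{In1}. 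This is the mechanism already used in Lemma~\ref{Int1} and is what the paper's ``As above'' refers to.

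There is a further subtlety in your $\Delta=0$ step. The paper does not run the argument with the full $D$ but with a sub-divisor $D'\subset D$ chosen so that $C+C'+D'$ is a \emph{wheel}: take in each $D^{(j)}$ the path joining the component meeting $C$ to the component meeting $C'$. With this choice one has $(C+C'+D'+K_{\bar X})\cdot D_i=0$ for $D_i\subset D'$ (wheel adjunction) and $\ge 0$ for $D_i\not\subset D'$, so once the horizontal part is gone the standard ``$0\le\Delta^2\le 0$'' argument gives $\Delta=0$ with no hypothesis on the shape of $D$. With your $L=C+C'+D+K_{\bar X}$ this fails: a leaf $D_i$ of $D$ not met by $C$ or $C'$ gives $L\cdot D_i=-1$, and excluding such leaves again requires the chain structure you do not yet have. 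Only \emph{after} proving $C+C'+D'+K_{\bar X}\sim 0$ does one recover $D'=D$ (intersect with any $D_i\notin D'$ to get $D'\cdot D_i=-K_{\bar X}\cdot D_i\le 0$, hence $=0$, and use connectedness of $D^{(j)}$), which yields the lemma as stated and, as a corollary, the chain configuration.
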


\begin{proof}
Note that there exists a divisor $D'=\sum D_{i_j}$, where $D_{i_j}$ is a component of $D$, such that $\tilde{C}+\tilde{C}'+D'$ is a wheel. Then by the Riemann-Roch theorem $|C+C'+D'+K_{\tilde{X}}|\neq\emptyset$.
Assume that \[C+C'+D'+K_{\tilde{X}}\sim\sum n_iE_i.\] Put \[C\equiv C'\equiv-a(K_{\tilde{X}}+D^\sharp)\quad (mod D),\quad E_i\equiv-e_i(K_{\tilde{X}}+D^\sharp)\quad (mod D).\] Assume that there exists $E_i$ such that $E_i$ is not component of $D$. Then \[(C+C'+D'+K_{\tilde{X}})\cdot(K_{\tilde{X}}+D^\sharp)=(-2a+1)(K_{\tilde{X}}+D^\sharp)^2=(\sum n_iE_i)\cdot(K_{\tilde{X}}+D^\sharp)=(-\sum n_ie_i)(K_{\tilde{X}}+D^\sharp)^2.\] So, \[1-2a=-\sum n_ie_i=-\sum\limits_{e_i>0} n_ie_i\leq -a(\sum\limits_{e_i>0} n_i).\] Then $1\leq(2-\sum\limits_{e_i>0} n_i)a$. Hence, $\sum\limits_{e_i>0} n_i\leq 1$.

Assume that $\sum\limits_{e_i>0} n_i=1$. Then $C+C'+D'+K_{\tilde{X}}\sim\Gamma+\Delta$, where $\Supp(\Delta)\subset\Supp(D)$ and $\Gamma$ is an irreducible curve. As above, $\Gamma$ is a $(-1)$-curve and $\tau(\Gamma)=\Gamma$. A contradiction. So, $C+C'+D'+K_{\tilde{X}}\sim\Delta$. As above, $\Delta=0$.
\end{proof}

\begin{corollary}
\label{Wheel1}
We see that $C+C'+D^{(1)}+D^{(2)}$ is a wheel.
\end{corollary}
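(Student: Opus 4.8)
The plan is to read off the wheel structure directly from the linear equivalence supplied by the previous lemma, using adjunction on each component. Set $B=C+C'+D^{(1)}+D^{(2)}$. The previous lemma gives $B+K_{\bar{X}}\sim 0$, so $B\sim -K_{\bar{X}}$; note that $B$ is reduced, and every component is a smooth rational curve ($C$ and $C'$ are $(-1)$-curves by Lemma \ref{Zhan2}, while the components of $D^{(1)}$ and $D^{(2)}$ are exceptional curves of the minimal resolution of log terminal, hence rational, singularities).

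The key computation I would carry out is the following. For each irreducible component $T$ of $B$, adjunction together with $T\cong\pp^1$ gives $-K_{\bar{X}}\cdot T=T^2+2$. Since $B\sim -K_{\bar{X}}$ and $T$ occurs in $B$ with coefficient one, we also have $B\cdot T=T^2+T\cdot(B-T)$, and comparing the two expressions yields $T\cdot(B-T)=2$ for every component $T$. Thus each component of $B$ meets the union of the remaining components in a length-$2$ scheme.

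Finally I would convert this numerical statement into the combinatorial conclusion. First, $B$ is connected: by Corollary \ref{Int2} and Lemma \ref{Zhan2} the curve $C$ meets one component of $D^{(1)}$ and one component of $D^{(2)}$, each transversally in a single point, and each $D^{(i)}$ is itself connected, so $C$ bridges $D^{(1)}$ and $D^{(2)}$ and adjoining $C'$ keeps the configuration connected. Next, all intersections among the components are transversal and no point lies on three components: the incidences $C\cdot D_i=1$ (and likewise for $C'$) are transversal by Lemma \ref{Zhan2}, and the two resolution configurations $D^{(1)}$, $D^{(2)}$ are simple normal crossing trees of rational curves. Hence the relation $T\cdot(B-T)=2$ forces every component to meet exactly two other components, each in one transversal point. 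The dual graph of $B$ is therefore connected and $2$-regular, and a connected $2$-regular graph is a single cycle; that is, $B=C+C'+D^{(1)}+D^{(2)}$ is a wheel.

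The step I expect to be the main obstacle is the last one: upgrading $T\cdot(B-T)=2$ to ``$T$ meets exactly two components.'' A priori the value $2$ could arise from a tangency (a single component met with multiplicity two) or from a triple point, which would produce a tacnodal or star configuration rather than a cycle, and ruling these out is exactly what the transversality in Lemma \ref{Zhan2} and the simple normal crossing structure of the resolution graphs achieve. The same relation also excludes the other danger, a cycle with trees attached to it, since a leaf $T$ of such a tree would satisfy $T\cdot(B-T)=1\neq 2$.
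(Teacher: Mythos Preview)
Your argument is correct. The adjunction computation $T\cdot(B-T)=2$ for every component $T$ of $B\sim -K_{\bar X}$, together with the transversality supplied by Lemma~\ref{Zhan2}, the simple normal crossing tree structure of each $D^{(i)}$, and $C\cdot C'=0$ from Lemma~\ref{Int1}, does force the dual graph of $B$ to be a single cycle. This is the standard way to recognise a reduced anticanonical divisor with smooth rational components as a wheel.

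The paper, however, arrives at the corollary by a different mechanism. In the proof of the preceding lemma a subdivisor $D'\subseteq D$ is \emph{chosen from the outset} so that $C+C'+D'$ is already a wheel (this is possible because $C$ and $C'$ each meet one component of $D^{(1)}$ and one of $D^{(2)}$, so one selects the chains in the trees $D^{(1)},D^{(2)}$ joining those attachment points). The lemma then shows $C+C'+D'+K_{\bar X}\sim 0$. Intersecting this relation with any component $D_k$ of $D$ not in $D'$ yields $D'\cdot D_k + K_{\bar X}\cdot D_k=0$, which forces $D_k^2=-2$ and $D'\cdot D_k=0$; connectedness of $D^{(i)}$ then gives $D'=D$. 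Thus the wheel structure of $C+C'+D^{(1)}+D^{(2)}$ is inherited from the construction of $D'$, and the corollary is immediate once one knows $D'=D$.

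In short: the paper builds the wheel first and then identifies it with the full exceptional divisor, whereas you start from the linear equivalence $B\sim -K_{\bar X}$ and read off the wheel structure via adjunction. Your route is more self-contained and does not depend on the internal construction in the lemma's proof; the paper's route is shorter in context because the cycle was already assembled there.
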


Put $D_1$ and $D_2$ are irreducible components of $D$ that meet $C$. Note that $D_1^2=-2$ or $D_2^2=-2$. Indeed, assume that $D_1^2<-2$ and $D_2^2<-2$. Let $g\colon\tilde{X}\rightarrow Y$ be the contraction of $C$. We see that the components of $g(D)$ generate Picard group and $g(D)$ is negative definition, a contradiction with Hodge index theorem \ref{Hodge}. So, if we consider a consequence of contraction of $(-1)$-curves in $C+C'+D^{(1)}+D^{(2)}$, we see that every $(-1)$-curve meets at least one $(-2)$-curve.
Consider contraction of pair of $(-1)$-curves in $C+C'+D^{(1)}+D^{(2)}$, start with $\tilde{C}$ and $\tilde{C}'$. We obtain a surface $Y$ such that $Y$ has no points over $\rr$. Note that the image of $C+C'+D^{(1)}+D^{(2)}$ is one of the followings
$$
\xymatrix@R=0.8em{
\\
\bullet\ar@{-}[r]\ar@{-}[d]&\circ\ar@{-}[d]\\
\circ\ar@{-}[r]&\bullet\\
}\eqno(a)
$$

$$
\xymatrix@R=0.8em{
&\bullet\ar@{-}[r]&\bullet\ar@{-}[rd]&\\
\circ\ar@{-}[ru]\ar@{-}[rd]&&&\circ\ar@{-}[ld]\\
&\bullet\ar@{-}[r]&\bullet &
}\eqno(b)$$

$$
\xymatrix@R=0.8em{
&\bullet\ar@{-}[r]&\star\ar@{-}[r]&\bullet\ar@{-}[rd]&\\
\circ\ar@{-}[ru]\ar@{-}[rd]&&&&\circ\ar@{-}[ld]\\
&\bullet\ar@{-}[r]&\star\ar@{-}[r]&\bullet &
}\eqno(c)$$
where $\circ$ denotes a $(-1)$-curve, $\bullet$ denotes a $(-2)$-curve, $\star$ denotes $(-n)$-curve ($n\geq 2$). In first and second cases we can contract all $(-2)$-curves. We obtain a del Pezzo surface $Y_0$ with Du Val singularities and $\rho(Y_0)=1$. Moreover, $Y_0$ has one of the followings collection of singularities: $2A_1,4A_1,2A_2,2A_2+2A_1$. It is impossible (see \cite{Fur}, \cite{Ma}).

So, we have the case (c). Let $g\colon Y\rightarrow Y_0$ be the contraction of all $(-n)$-curves ($n\geq 2$). Then $Y_0$ is the del Pezzo surface with log terminal singularities and $\rho(Y_0)=1$.
Put $E_1$ and $E_2$ are $(-1)$-curve, $D_1,D_2,D_3, D_4$ are $(-2)$-curves, $D_5, D_6$ are $(-n)$-curves in (3). We assume that $E_1\cdot D_1= E_1\cdot D_2=1$, $E_2\cdot D_3= E_2\cdot D_4=1$. Note that the linear system $|2E_1+D_1+D_2|$ induces $\pp^1$-fibration $\phi\colon Y\rightarrow\pp^1$. We see that $D_5$ and $D_6$ are section of $\phi$. By lemma \ref{Zhan0} we see that there exist exactly one singular fiber of type (3), i.e. there exists a unique singular fiber $F$ that consists of two $(-1)$-curves $E_3$ and $E_4$. Then $Y_0$ has $\tau$-invariant point $Q=E_3\cap E_4$, a contradiction.

\end{document}